\documentclass[10pt]{article}
\usepackage{a4}
\usepackage[utf8]{inputenc}
\usepackage[T1]{fontenc}

\usepackage{amsmath}
\usepackage{amsthm}
\usepackage{amssymb}
\usepackage{amsopn}
\usepackage{amsfonts}
\usepackage{mathrsfs}
\usepackage{enumitem}
\usepackage{color}
\usepackage{mathtools}
\usepackage{hyperref}
\usepackage{doi}
\usepackage{graphicx}
\usepackage{bm}
\usepackage{verbatim}

\theoremstyle{plain}
\newtheorem{theorem}{Theorem}[section]
\newtheorem{proposition}[theorem]{Proposition}

\newtheorem{lemma}[theorem]{Lemma}

\theoremstyle{definition}

\theoremstyle{remark}
\newtheorem{remark}[theorem]{Remark}

\DeclareRobustCommand{\rchi}{{\mathpalette\irchi\relax}}
\newcommand{\irchi}[2]{\raisebox{\depth}{$#1\chi$}}


\DeclareMathOperator{\dist}{dist}
\DeclareMathOperator{\spt}{spt}  
\DeclareMathOperator{\into}{int}

\newcommand{\measurerestr}{%
  \raisebox{-.127ex}{\reflectbox{\rotatebox[origin=br]{-90}{$\lnot$}}}%
}
\DeclareMathOperator{\restrict}{\hspace{-0.2ex}\measurerestr\hspace{-0.2ex}}

\def\XXint#1#2#3{{\setbox0=\hbox{$#1{#2#3}{\int}$}
     \vcenter{\hbox{$#2#3$}}\kern-.5\wd0}}



\newcommand{\N}{\mathbb{N}}
\newcommand{\R}{\mathbb{R}}
\newcommand{\C}{\mathcal{C}}

\newcommand{\eps}{\varepsilon}


\begin{document}

\begin{center}
\begin{Large}
Two phase models for elastic membranes with soft inclusions 
\end{Large}
\\[0.5cm]
\begin{large}
Mario Santilli\footnote{Università degli studi dell'Aquila, Italy, {\tt mario.santilli@univaq.it}} and 
Bernd Schmidt\footnote{Universität Augsburg, Germany, {\tt bernd.schmidt@math.uni-augsburg.de}}
\end{large}
\\[0.5cm]
\today
\\[1cm]
\end{center}

\begin{abstract}
We derive an effective membrane theory in the thin film limit within a two phase material model for a specimen consisting of an elastic matrix and soft inclusions or voids. These inclusions may lead to the formation of cracks within the elastic matrix and the corresponding limiting models are described by Griffith type fracture energy functionals. We also provide simplified proofs of relaxation results for bulk materials. 
\end{abstract}

\begin{small}

\noindent{\bf Keywords.} Phase transition, elasticity, soft inclusions, relaxation, membranes, Gamma-convergence. 
\medskip

\noindent{\bf Mathematics Subject Classification.} 
74K15,  
74A45, 
74A50, 
74B20, 
49J45  
\end{small}


\section{Introduction}\label{sec:Intro}

Two phase energy functionals of the form  
\begin{equation*}
	\mathcal{E}(y, D) 
	= \int_{\Omega \setminus D} W \big( \nabla y(x) \big) \, \mathrm{d}x 
	+ \int_{\Omega \cap \partial^\ast D} \psi \big( \nu(D) \big) \, \mathrm{d} \mathcal{H}^{n-1}
\end{equation*}
naturally arise in the study of an elastic material with an unknown void or a soft inclusion. In this paper we assume that $ W $ is a  function on $ \mathbb{R}^{m \times n} $ satisfying  a standard two-sided $ L^p $ growth condition, the function $ \psi $ is a norm on $ \mathbb{R}^n $, $\Omega \subset \R^n$ is an open (Lipschitz)  domain, $y \in W^{1,p}(\Omega, \mathbb{R}^m)$ with $p > 1 $, $D $ is a set of finite perimeter contained in $ \Omega $. Recall that $ \partial^\ast D $ is the measure-theoretic boundary of $ D $ and $ \nu(D) $ is the (exterior) measure-theoretic unit-normal of $ D $. We refer to Section \ref{section:BV} for a detailed account on the notation used in this paper.

In general, the occurrence of a degenerate phase can model a variety of quite different systems of relevance. Examples include the formation of voids in a device due to mechanical or chemical degredation (or even enhancement as in Swiss cheese caused by propionic acid bacteria), soft phases  of a material such as a liquid region at the onset of a solid/liquid phase transition or a superelastic martensite phase within a shape memory alloy, cp.\ \cite{HuoMueller:93}, as well as material mixtures in which an elastic material is invaded by a (chemical or biological) substance that causes the development of extremely soft regions. Examples of the latter are the immersion of water in gypsum rock \cite{AuvrayHomandSorgi:04,Zhu_etal:19} and the resulting softening which may have severe impact on the stability of mining goafs \cite{Zhu_etal:19,Wang_etal:19} and cerebral softening \cite{KIYEA:11,NoelKuhl:19} in the light of recent elastic models of brain tissue \cite{MihaiBuddayHolzapfelKuhlGoriely:17}. At variance with other common models involving composite materials such as  \cite{FocardiGelliPonsiglione:09,BraidesSolci:13,BarchiesiLazzaroniZeppieri:16,CagnettiDalMasoScardiaZeppieri:19}, in such situations the position and geometry of the soft inclusions are not pre-assigned. 

Looking more specifically at examples of thin membranes in two phase models, which is one of the central topics of this paper, we mention that polymer electrolyte membranes (PEM) have received considerable interest recently as they constitute a basic component in PEM fuel cells. Such devices, which generate electricity from hydrogen and oxygen, promise to provide an environmentally friendly alternative to fossil fuels. Their improvement both in performance and longevity is thus a most desirable goal. As both chemical and mechanical degredation mechanisms may lead to the formation of voids (and eventually cracks) within the polymer membrane, effective models for membranes with soft inclusions are of fundamental importance in order to gain a better understanding of operational failure of such fuel cells. We refer to \cite{Singh_etal_2021} for a recent experimental investigation into membrane degredation in PEM fuel cells and the references cited therein for a broader review of the literature on membrane failure in such systems. 

We finally remark that in our theory both variable and fixed volume fractions of the two phases can be considered. This allows to describe the possibility of phase transformations as well as mixtures of different materials like alloys.

As usual in the study of energy functionals, a first basic question asks to determine the relaxation of $ \mathcal{E} $ and  this question has been first addressed in \cite{BraidesChambolleSolci:07} for quasiconvex functions $ W $. The relaxed functional is computed on (a subset of) the space of generalized special functions of bounded variations $GSBV(\Omega;\R^m)$ and it is of the form 
\[ \mathcal{E}^{\rm rel}(y, D) 
   = \int_{\Omega \setminus D} W^{\rm qc} \big( \nabla y \big) \, \mathrm{d}x 
     + 2  \int_{S_y \cap D^0} \psi(\nu(y))\, \mathrm{d}\mathcal{H}^{n-1} 
     +  \int_{\Omega \cap \partial^\ast D} \psi(\nu(D))\, \mathrm{d}\mathcal{H}^{n-1},
\] 
where $y$ has jump discontinuities along the codimension one set $S_y$ with unit-normal $\nu(y)$, $ D^0 $ is the measure-theoretic exterior of $ D $ (see Section \ref{section:BV}) and $W^{\rm qc}$ denotes the quasiconvex envelope of $W$.  We also mention that in \cite{FonsecaFuscoLeoniMillot:11} a strongly related relaxation result in two dimensions is obtained. A main motivation for such results has derived from investigations on epitaxially strained films \cite{BonnetierChambolle:02,ChambolleSolci:07}. We refer to \cite{CrismaleFriedrich:20} for latest results and a detailed account of the literature in that direction.

In the first part of this paper we give a new approach to compute this relaxed functional for every Borel  function $ W $ satisfying  a standard two-sided $ L^p $ growth condition, see \eqref{eq:p-growth}. We refer to Theorem \ref{theo:bulk-relaxation} for the precise statement. Our approach provides a considerably simpler and more direct proof of the lim inf inequality and a detailed proof of the lim sup inequality, expanding an argument which is only briefly sketched in \cite{BraidesChambolleSolci:07}. More specifically, our main contributions to the relaxation result are the following ones. 

A) In \cite{BraidesChambolleSolci:07} the proof of the lim inf inequality is based on a slicing argument. Because of the technicalities involved, this slicing argument is given in details only in the scalar case ($m=1$) and for a special choice of $ W $ and $ \psi $ (while the authors briefly indicate the necessary modifications in \cite[Remarks 6,7,8]{BraidesChambolleSolci:07} to handle the general case). On the other hand, our approach to establish the lim inf inequality is completely different, considerably simpler and it allows to directly deal with the general case without additional efforts. Instead of a slicing argument, our key idea is based on the choice of a suitable comparison functional of ``Griffith type'', which allows to obtain the sharp lower  bound from well known lower-semicontinuity results for BV elliptic functions (Theorem \ref{theo:BV-ellipticity}). 

B) We provide a detailed construction of the lim sup inequality  in the general case: we first employ some recent results on the anisotropic Minkowski content (see \cite{LussardiVilla:16} and Lemma \ref{area of distance sets}) to explicitly find the recovery sequence for regular pairs $(y,D)$ and then we pass to more general pairs with the help of suitable density results. This is a very natural argument, which has been also briefly sketched in \cite[Remark 13]{BraidesChambolleSolci:07}. On the other hand, a careful analysis of the details reveals some geometric-measure theoretic subtleties. For example, one subtle point is that it is not clear a priori that one may pass to a limit in the variables $ y $ and $ D $ simultaneously. We overcome this difficulty by passing to the limits consecutively. This allows for an application of the standard approximation result in $SBV^p_\infty$ by Cortesani-Toader, cf.\cite{CortesaniToader:99}), but then requires a deeper argument for sets of finite perimeter in terms of one-sided smooth approximations, cf.\ Theorem~\ref{theo:ComiTorres} and cp.\ \cite{ChenTorresZiemer:09,ComiTorres:17}.

We also mention that we prove that $\mathcal{E}^{\rm rel}(y, D)$ can be realized as $\lim_{k \to \infty} \mathcal{E}(y_k, D_k)$ with $y_k \to y$, $D_k \to D$ and $\mathcal{L}^n(D_k) = c_k$ for any preassigned (positive) sequence $c_k$ with $c_k \to \mathcal{L}^n(D)$. In particular, if $\mathcal{L}^n(D) = 0$ we thus obtain $\Gamma$-convergence to a pure Griffith type fracture functional of the form  
\[ y \mapsto 
\int_{\Omega} W^{\rm qc} \big( \nabla y(x) \big) \, \mathrm{d}x 
+ 2  \int_{S_y} \psi(\nu(y))\, \mathrm{d}\mathcal{H}^{n-1}, \] 
cp.\ \cite{Griffith:21,FrancfortMarigo:98,AmbrosioFuscoPallara:00}.

In the second part of this paper we further advance the theory of two phase energy functionals providing a novel analysis of thin films in the membrane limit. In particular, we focus on thin films with reference configuration $ \Omega_h = \omega \times (0, h) \subset \mathbb{R}^3 $ of small `membrane heights' $0 < h \ll 1$ and we provide a novel dimensionally reduced membrane theory in the limit $h \to 0$ for thin films consisting of an elastic matrix and a soft inclusion. Our result extends the classical work for purely elastic materials in \cite{LeDretRaoult:95} and for brittle materials in \cite{BraidesFonseca:01,BouchitteFonsecaLeoniMascarenhas:02}. This is achieved in Theorem~\ref{theo:membrane-Gamma} where we study the $ \Gamma $-convergence of appropriately renormalized versions of the functionals 
 \begin{align*}
 	\mathcal{G}_h(u,D) = \int_{\Omega_h \setminus D} W(\nabla u) \, \mathrm{d}x +  \int_{\Omega_h \cap \partial^\ast D} \psi(\nu(D))\, \mathrm{d}\mathcal{H}^{2},
 \end{align*}
where $ D \subset \Omega_h $ is a set of finite perimeter representing the shape of the voids, $ u : \Omega_h \rightarrow \mathbb{R}^3 $ is a Sobolev map representing the elastic deformation field, $ W $ is a continuous stored energy function and $ \psi $ is an arbitrary norm, which allows to model a possibly anisotropic surface energy on $\partial D \cap \Omega_h$ depending on the (exterior) normal $\nu(D)$ to $\partial^\ast D$. We explicitly compute the $ \Gamma $-limit, which is given by the functional $ \mathcal{E}_0^{\rm rel} $ on (a subset of) the space of generalized functions of bounded variations $GSBV(\omega; \mathbb{R}^3) $ of the form
 \begin{align*}
 	\mathcal{E}_0^{\rm rel}(u,D) 
 	= \int_{\omega \setminus D}W_0^{\rm qc}(\nabla u)\, \mathrm{d}x 
 	+ 2\int_{S_u \cap D^0} \psi_0(\nu(u))\, \mathrm{d}\mathcal{H}^{1}
 	+  \int_{\omega \cap \partial^\ast D} \psi_0(\nu(D))\, \mathrm{d}\mathcal{H}^{1}.
 \end{align*}
Here $W_0$ and $\psi_0$ are explicit (see equations \eqref{eq: W0} and \eqref{eq: psi0} at page \pageref{eq: psi0}) and $ W_0^{\rm qc}$ is the quasiconvex envelope of $ W_0 $. ($\psi_0$ turns out to be automatically BV elliptic.) Moreover we obtain the recovery sequence subject to volume constraints on the voids. For certain norms $ \psi $ one has that $\psi_0(\nu) = \psi(\nu,0)$ for every $ \nu \in \mathbb{R}^2 $, which leads to consider cylindrical shapes of voids in the recovery sequence. However, the general case poses some additional difficulties in the construction of recovery sequences and one finds that a crack in the limiting 2d model might typically be induced from non-cylindrical voids in the parent 3d model whose outer boundary normal has a nontrivial and non-constant out-of-plane component.

\section{Functions of bounded variation}\label{section:BV}

We collect here the notation and some basic material on generalized functions of bounded variation and sets of finite perimeter. For an exhaustive treatment of this subject we refer to \cite{AmbrosioFuscoPallara:00}. We fix a norm $ \psi $ on $ \mathbb{R}^n $ and write $ | \cdot | $ for the Euclidean norm. The dual norm of $ \psi $ is denoted 
\begin{equation*}
	\psi^\circ(u) = \max\{ \langle u,v \rangle :\psi(v) \leq 1  \} \qquad \textrm{for $ u \in \mathbb{R}^n $.}
\end{equation*}
Let $\Omega \subset \R^n$ be an open bounded set, $u : \Omega \to \R^m$ a Borel function and $x \in \Omega $. We say that $a \in \R^m$ is the {\em approximate limit} of $u$ at $ x $ if 
\[ \lim_{\rho \searrow 0} \rho^{-n} \mathcal{L}^n \big( \{ x' \in B_{\rho}(x) : |u(x') - a| > \eps \} \big) = 0 \] 
for each $\eps > 0$, in which case we write $\tilde{u}(x)$ for $a$. If this limit does not exist we say that $x$ belongs to the {\em approximate discontinuity set} $S_u$.\footnote{A caveat on notation: The set $S_u$ is called {\em weak} approximate discontinuity set and denoted $S^*_u$ in \cite{AmbrosioFuscoPallara:00}.}  For every $x \in \Omega \setminus S_u$ we say that $A \in \R^{m \times n}$ is the {\em approximate differential} of $u$ at $x \in \Omega$ if 
\[ \lim_{\rho \searrow 0} \rho^{-n} \mathcal{L}^n \big( \{ x' \in B_{\rho}(x) : |u(x') - \tilde{u}(x) - A (x' - x)| > \eps |x' - x| \} \big) = 0 \] 
for each $\eps > 0$. In this case we write $\nabla u(x)$ for $A$. 

We say that a Borel subset $ S \subset \mathbb{R}^n $ is countably $ \mathcal{H}^{n-1} $-rectifiable if there are at most countably many $C^1$ hypersurfaces of dimension $ n-1 $ in $\Omega$ that cover $S_u$ up to an $\mathcal{H}^{n-1}$ negligible set. If moreover $ \mathcal{H}^{n-1}(S) < \infty $ then we say that $ S $ is $ \mathcal{H}^{n-1} $-rectifiable. 

A function $u \in L^1(\Omega; \R^m)$ is said to lie in the space $BV(\Omega; \R^m)$ of functions of {\em bounded variation} if its distributional derivative $Du$ is a finite $\R^{m \times n}$-valued Radon measure. The total variation of $ u $ with respect to the Euclidean norm is denoted by $ |D u| $. We also need to consider \emph{the anisotropic total variation} $ \psi(Du) $ of $ D u  $ with respect to $ \psi $ for a function $ u \in BV(\Omega) $: this is the Radon measure $ \psi(D u) $ on $ \Omega $ given by 
\begin{equation*}
\psi(D u)(B) = \int_{B} \psi \bigg( \frac{D u}{|D u|}\bigg)\, \mathrm{d}|D u| \quad \textrm{for  $ B \subset \Omega $ Borel,}
\end{equation*}
where $\frac{D u}{|D u|}$ is the $ |D u| $-measurable function satisfying $ D u = \frac{D u}{|D u|} |D u| $. Setting $ \psi(D u) = + \infty $ for $ u \in L^1(\Omega) \setminus BV(\Omega) $, it follows from the Reshetnyak lower semicontinuity theorem \cite[Theorem~2.38]{AmbrosioFuscoPallara:00} that the function $ u \mapsto \psi(D u)(U) $ is lower semicontinuous in the $ L^1(U) $ topology for any open subset $U$ of $\Omega$. If $ u \in BV(\Omega; \mathbb{R}^m) $, the approximate discontinuity set $S_u$ is a countably $\mathcal{H}^{n-1}$ rectifiable set. The Lebesgue decomposition of $Du$ turns out to be $Du = \nabla u \mathcal{L}^n + D^s u$ with $\nabla u \in L^1(\Omega;\R^{m \times n})$ and singular part $D^s u$. If moreover $D^s u$ is concentrated on $S_u$, we speak of a {\em special function of bounded variation} and write $u \in  SBV(\Omega; \R^m)$. If $u \in SBV(\Omega'; \R^m)$ for all $\Omega' \subset \subset \Omega$, we write $u \in SBV_{\rm loc}(\Omega; \R^m)$. 

A function $u : \Omega \to \R^m$ is a {\em generalized function of bounded variation}, write $u \in GSBV(\Omega; \R^m)$, whenever $\varphi \circ u \in SBV_{\rm loc}(\Omega; \R^m)$ for every $\varphi \in C^1(\R^m)$ with $\spt \nabla \varphi \subset \subset \R^m$. If $m = 1$ this is equivalent to $u^M = (u \wedge M) \vee (-M) \in SBV(\Omega)$ for every $M > 0$. Moreover, for $ 1 \leq p,q \le \infty $ we define $(G)SBV^p(\Omega; \R^m)$ as the space of functions $u \in (G)SBV (\Omega;\R^m)$ for which $\nabla u \in L^p(\Omega; \R^{m \times n})$ and $\mathcal{H}^{n-1}(S_u) < \infty$ and set $(G)SBV^p_q(\Omega; \R^m) = (G)SBV^p(\Omega; \R^m) \cap L^q(\Omega; \R^m)$. In \cite[Prop.~2.3]{DalMasoFrancfortToader:05} it has been noted that $GSBV^p(\Omega;\R^m)$ is a vector space and that $u = (u_1, \ldots, u_m)$ belongs to $GSBV^p(\Omega;\R^m)$ if and only if $u_i \in GSBV^p(\Omega)$ for all $i$ and that, as a consequence, the scalar results in \cite[Sect.~4.5]{AmbrosioFuscoPallara:00} apply to show that for $u \in GSBV^p(\Omega;\R^m)$ still $S_u$ is an $\mathcal{H}^{n-1}$ rectifiable set. If we fix an approximate unit normal vector field $ \nu $ of $ S $, then $\mathcal{H}^{n-1}$ a.e.\ point $x \in S_u$ is an {\em approximate jump point} of $u$ in the sense that there are distinct $u^+(x), u^-(x) \in \R^m$ such that 
\[ \lim_{\rho \searrow 0} \rho^{-n} \mathcal{L}^n \big( \{ x' \in B_{\rho}(x) \cap H^{\pm} : |u(x') - u^{\pm}(x)| > \eps \} \big) = 0 \] 
for each $\eps > 0$, where $H^{\pm} = \{x' \in \Omega : \pm (x' - x) \cdot \nu(x) > 0 \}$. With little abuse of notation we denote each vector field $ \nu $ as above with $ \nu(u) $; notice that the triple $ (\nu(u)(x), u^+(x), u^{-}(x)) $ is uniquely determined up to a sign and a permutation. Setting $u^M = (u_1^M, \ldots, u_m^M)$ one has $S_u = \bigcup_{M > 0} S_{u^M}$ and 
\[ \nabla u_i 
   = \nabla u_i^M 
   \quad \mathcal{L}^n\text{ a.e.~on } \{|u_i| \le M \} \]
for $i = 1, \ldots, m$. The existence of one-sided traces $u^{\pm}$ is guaranteed on any countably $\mathcal{H}^{n-1}$ rectifiable set oriented by some normal field. We also note that $GSBV^p_{\infty}(\Omega;\R^m) = SBV^p_{\infty}(\Omega;\R^m)$ and that $S_u$ is the complement of the set of Lebesgue points of $u$ if $u \in L^{\infty}(\Omega;\R^m)$.

If $E \subset \R^n$ is a Borel subset then its {\em measure theoretic interior} $E^1$, {\em exterior} $E^0$ and {\em boundary} $\partial^* E$ are given by 
\begin{align*}
  E^1 
  &= \{ x \in \R^n : \lim_{\rho \searrow 0} \rho^{-n} \mathcal{L}^n \big( B_{\rho}(x) \setminus E \big) = 0 \}, \\ 
  E^0 
  &= \{ x \in \R^n : \lim_{\rho \searrow 0} \rho^{-n} \mathcal{L}^n \big( B_{\rho}(x) \cap E \big) = 0 \}, \\ 
  \partial^* E 
  &= \R^n \setminus (E^1 \cup E^0),
\end{align*} 
which are easily seen to be Borel subsets. If $ E \subset \Omega $ is a Borel set, then we say that $ E $ is a set of finite perimeter in $ \Omega $ if and only if $ \rchi_E \in BV(\Omega) $. The total variation measure of $D \rchi_E$ satisfies $|D \rchi_E| = \mathcal{H}^{n-1} \restrict (\Omega \cap \partial^* E)$. Since $S_{\rchi_E} = \partial^* E \cap \Omega$, we have $\rchi_E \in SBV(\Omega)$ and we set $ \nu(E) = \frac{D \rchi_E}{|D \rchi_E|}$. We write $\mathcal{F}(\Omega)$ to denote the collection of sets $E \subset \Omega$ of finite perimeter in $\Omega$. Moreover if $ E \in \mathcal{F}(\Omega) $ we notice that $\psi(D \rchi_E)$ is the anisotropic surface measure on $\partial^\ast E$ with density $\psi(\nu(E))$, i.e., 
\begin{equation*}
\psi(D \rchi_E) 
= \psi(\nu(E)) \mathcal{H}^{n-1} \restrict \partial^\ast E.
\end{equation*}	
We recall from \cite{Grasmair:10} the following anisotropic version of coarea formula for $ BV $ functions: if $ u \in BV(\Omega) $, then 
\begin{equation}\label{Coarea}
\psi(D u)(B) = \int_{-\infty}^{+ \infty} \psi(D \rchi_{\{u \geq t    \}}) (B)\, \mathrm{d}t
\end{equation}
for each Borel subset $ B \subset \Omega $.

We proceed to state the relevant compactness and lower semicontinuity results. The basic compactness theorem in $(G)SBV^p$ of Ambrosio is the following, cf.\ \cite{Ambrosio:90,Ambrosio:95,AmbrosioFuscoPallara:00}: 
\begin{theorem}\label{theo:Ambrosio-compactness}
Let $\Omega \subset \R^n$ be a bounded open set und $(u_k) \subset GSBV^p_q(\Omega; \R^m)$ for $p > 1$ and $q \ge 1$. Suppose that 
\[ \| u_k \|_{L^q(\Omega; \R^{m})} + \| \nabla u_k \|_{L^p(\Omega; \R^{m \times n})} + \mathcal{H}^{n-1}(S_{u_k}) \le C \]
for some constant $C > 0$. Then there exists a subsequence (not relabeled) and a $u \in GSBV^p_q(\Omega; \R^m)$ such that 
\begin{itemize}
\item[(i)] $u_k \to u$ $\mathcal{L}^n$ a.e.\ and, in case $q > 1$, in $L^1(\Omega; \R^m)$ (strongly), 
\item[(ii)] $\nabla u_k \rightharpoonup \nabla u$ in $L^p(\Omega; \R^{m \times n})$ (weakly) and 
\item[(iii)] $\liminf_{k \to 0} \mathcal{H}^{n-1}(S_{u_k}) \ge \mathcal{H}^{n-1}(S_{u})$. 
\end{itemize}
\end{theorem}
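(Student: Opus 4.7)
The plan is to reduce the assertion to the classical $SBV$ compactness theorem through truncation in the target, followed by a diagonal argument over the truncation level $M$. By \cite[Prop.~2.3]{DalMasoFrancfortToader:05} as recalled in Section~\ref{section:BV}, membership in $GSBV^p(\Omega;\R^m)$ and the controls on gradients and jump sets decouple into scalar conditions on the components $u_{k,i}$ (via $S_{u_k} = \bigcup_i S_{u_{k,i}}$), and the conclusions (i)--(iii) may be verified componentwise. I therefore restrict to the scalar case $m=1$.

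For each $M \in \N$ set $u_k^M = (u_k \wedge M) \vee (-M) \in SBV(\Omega)$. Then $\|u_k^M\|_{L^{\infty}} \le M$, $|\nabla u_k^M| \le |\nabla u_k|$ $\mathcal{L}^n$-a.e., and $S_{u_k^M} \subseteq S_{u_k}$, so the hypotheses of Ambrosio's $SBV$ compactness theorem (cf.\ \cite{Ambrosio:90,Ambrosio:95,AmbrosioFuscoPallara:00}) are met at every fixed $M$. I extract, for each $M$, a subsequence along which $u_k^M \to v^M$ in $L^1(\Omega)$ and a.e., $\nabla u_k^M \rightharpoonup \nabla v^M$ in $L^p$, and $\mathcal{H}^{n-1}(S_{v^M}) \le \liminf_k \mathcal{H}^{n-1}(S_{u_k^M})$. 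A Cantor diagonal extraction produces one subsequence that works simultaneously for every $M \in \N$, and a.e.\ convergence of truncations yields the compatibility $(v^{M'})^M = v^M$ for $M \le M'$, so the family glues to a single Borel function $u$ with $u^M = v^M$ for every $M$.

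To conclude (i)--(iii), I first observe that by Chebyshev and the $L^q$ bound, $\mathcal{L}^n(\{|u_k| > M\}) \le C M^{-q}$ uniformly in $k$, so a.e.\ convergence $u_k^M \to u^M$ combined with $u_k^M = u_k$ on $\{|u_k| \le M\}$ upgrades (with a further diagonal extraction if needed) to $u_k \to u$ a.e. Fatou gives $u \in L^q$, and writing $\nabla u = \nabla u^M$ on $\{|u| < M\}$ yields $\nabla u \in L^p$ by monotone convergence; hence $u \in GSBV^p_q(\Omega)$. For (iii), since the sets $S_{u^M}$ are increasing in $M$ with union $S_u$,
\begin{equation*}
  \mathcal{H}^{n-1}(S_u)
  = \sup_M \mathcal{H}^{n-1}(S_{u^M})
  \le \sup_M \liminf_k \mathcal{H}^{n-1}(S_{u_k^M})
  \le \liminf_k \mathcal{H}^{n-1}(S_{u_k}).
\end{equation*}
For the weak convergence (ii), extract a further subsequence with $\nabla u_k \rightharpoonup g$ in $L^p$, test against $\varphi \in L^{p'}(\Omega;\R^n)$, and decompose
\begin{equation*}
  \int_\Omega \nabla u_k \cdot \varphi \, \mathrm{d}x
  = \int_\Omega \nabla u_k^M \cdot \varphi \, \mathrm{d}x
  + \int_{\{|u_k| \ge M\}} \nabla u_k \cdot \varphi \, \mathrm{d}x.
\end{equation*}
The first term tends to $\int \nabla u^M \cdot \varphi$; the second is bounded via H\"older by $\|\nabla u_k\|_{L^p} \bigl( \int_{\{|u_k| \ge M\}} |\varphi|^{p'} \bigr)^{1/p'}$, which vanishes uniformly in $k$ as $M \to \infty$ by absolute continuity of the integral of $|\varphi|^{p'}$ applied to the sets $\{|u_k| \ge M\}$ of measure $O(M^{-q})$. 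Letting $M \to \infty$ and using $\nabla u^M \to \nabla u$ in $L^p$ identifies $g = \nabla u$. Finally, when $q>1$ the $L^q$ bound gives equi-integrability of $(u_k)$, and Vitali's theorem upgrades a.e.\ convergence to strong $L^1$ convergence in (i).

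The main obstacle is the weak $L^p$ convergence of the approximate gradients in (ii): since $\nabla u_k \neq \nabla u_k^M$ precisely on $\{|u_k| \ge M\}$, one must reconcile the truncation-level compactness with a tail control on these sets. This is the step where the $L^q$ bound enters non-trivially (beyond its role for equi-integrability of $(u_k)$ when $q>1$), providing the required uniform bound $\mathcal{L}^n(\{|u_k| \ge M\}) \le C M^{-q}$.
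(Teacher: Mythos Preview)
The paper does not supply a proof of this statement: Theorem~\ref{theo:Ambrosio-compactness} is quoted as a known result with references to \cite{Ambrosio:90,Ambrosio:95,AmbrosioFuscoPallara:00}. Your sketch is essentially the standard argument one finds in those sources (truncate, apply $SBV$ compactness at each level, diagonalize, and recover the untruncated limit), and it is correct in substance.

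One point deserves care. You assert that conclusions (i)--(iii) ``may be verified componentwise'' and then restrict to $m=1$. This is fine for (i) and (ii), but for (iii) a purely componentwise argument would only yield $\mathcal{H}^{n-1}(S_{u_i}) \le \liminf_k \mathcal{H}^{n-1}(S_{u_{k,i}})$ for each $i$, and summing loses a factor of $m$ since $S_u = \bigcup_i S_{u_i}$. What actually works is to run your truncation argument directly in the vector-valued setting: set $u_k^M = ((u_{k,1})^M,\ldots,(u_{k,m})^M) \in SBV^p_\infty(\Omega;\R^m)$, apply the vector-valued $SBV$ compactness theorem at each $M$, and then invoke exactly the monotone-family argument you wrote for (iii), noting that $S_{u_k^M} \subseteq S_{u_k}$ and $S_u = \bigcup_M S_{u^M}$ hold for vector truncations as recalled in Section~\ref{section:BV}. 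So the proof you wrote is correct once you drop the unnecessary reduction to $m=1$ and apply the same steps vectorially.
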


For the $\liminf$ inequalities we will make use of lower semicontinuity results. In particular, the lower semicontinuity of the bulk term follows from Kristensen's theorem in \cite{Kristensen:99}. 

\begin{theorem}\label{theo:Kristensen}
	Let $p > 1$. Suppose that $f : \R^{m \times n} \to \R$ is quasiconvex with $- C \le f(X) \le C|X|^p + C$ for all $X \in \R^{m \times n}$ and for some constant $C > 0$.  Suppose $ \Omega \subset \mathbb{R}^n $ is a bounded open set and $(u_k) \subset GSBV^p_1(\Omega;\R^m)$ is such that 
	\[ \| \nabla u_k \|_{L^p(\Omega; \R^{m \times n})} + \mathcal{H}^{n-1}(S_{u_k}) \le C \]
	for some constant $C > 0$ and $u_k \to u$ in $L^1(\Omega;\R^m)$. Then 
	\[ \liminf_{k \to \infty} \int_{\Omega} f(\nabla u_k) \, \mathrm{d}x 
	\ge \int_{\Omega} f(\nabla u) \, \mathrm{d}x. \]
\end{theorem}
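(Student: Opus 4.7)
My plan is to prove this by a Fonseca--Müller blow-up argument, after first reducing to bounded $SBV^p$ functions by truncation. Since adding a constant to $f$ is immaterial, I assume $f \ge 0$. Truncating componentwise, $u_k^M := (u_k \wedge M) \vee (-M) \in SBV^p_\infty(\Omega;\R^m)$, so that $u_k^M \to u^M$ in $L^1$, while $\|\nabla u_k^M\|_{L^p}$ and $\mathcal{H}^{n-1}(S_{u_k^M})$ remain controlled. Since $\nabla u_k^M = \nabla u_k$ on $\{|u_k|\le M\}$ and $f \ge 0$,
\[
\int_\Omega f(\nabla u_k)\,\mathrm{d}x \ge \int_\Omega f(\nabla u_k^M)\,\mathrm{d}x - f(0)\,\mathcal{L}^n\{|u_k|>M\},
\]
with $\mathcal{L}^n\{|u_k|>M\} \le C/M$ from the $L^1$-bound on $u_k$. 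Assuming the result in $SBV^p_\infty$ and sending first $k \to \infty$, then $M \to \infty$, monotone convergence together with the weak $L^p$-bound on $\nabla u$ from Theorem~\ref{theo:Ambrosio-compactness} gives $\int f(\nabla u^M)\,\mathrm{d}x \to \int f(\nabla u)\,\mathrm{d}x$, closing the reduction.

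Now assume $(u_k) \subset SBV^p_\infty$. Along a subsequence, $f(\nabla u_k)\mathcal{L}^n \wsto \mu$ and $\mathcal{H}^{n-1}\restrict S_{u_k} \wsto \sigma$ in $\mathcal{M}(\overline\Omega)$. Writing $\mu = g\mathcal{L}^n + \mu^s$ and using $\liminf_k \int f(\nabla u_k) \ge \mu(\Omega) \ge \int g\,\mathrm{d}x$, it suffices to prove $g(x_0) \ge f(\nabla u(x_0))$ at $\mathcal{L}^n$-a.e.\ $x_0$. I would select $x_0$ simultaneously a Lebesgue point of $u$ and $\nabla u$, with $x_0 \notin S_u$, an $\mathcal{L}^n$-density point of $\mu$, and such that $r^{-(n-1)}\sigma(\overline{Q_r(x_0)}) \to 0$ (valid $\mathcal{L}^n$-a.e.\ because $r^{-(n-1)}\sigma(\overline{Q_r(x_0)}) = r\cdot r^{-n}\sigma(\overline{Q_r(x_0)})$ with the second factor controlled by the Radon--Nikodym density of $\sigma$ at $x_0$). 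Setting $A := \nabla u(x_0)$ and
\[ w_j(y) := r_j^{-1}\bigl(u_{k_j}(x_0+r_j y) - \tilde u(x_0) - r_j A y\bigr), \]
a diagonal choice $r_j \to 0$, $k_j \to \infty$ produces $w_j \in SBV^p(Q_1;\R^m)$ with $w_j \to 0$ in $L^1$, $\|\nabla w_j\|_{L^p(Q_1)}$ bounded, $\mathcal{H}^{n-1}(S_{w_j}) \to 0$, and $\int_{Q_1} f(A+\nabla w_j)\,\mathrm{d}y \to g(x_0)$.

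The remaining and crux step is to show that such a sequence satisfies $\liminf_j \int_{Q_1} f(A+\nabla w_j)\,\mathrm{d}y \ge f(A)$. The plan is a Lusin--Whitney-type extension: cover $S_{w_j}$ by a family of dyadic cubes with total Lebesgue measure $o(1)$ (controlled by $\mathcal{H}^{n-1}(S_{w_j})$ via a relative isoperimetric inequality), and replace $w_j$ inside this cover by a mean-value extension to obtain $\tilde w_j \in W^{1,p}(Q_1;\R^m)$. The classical Acerbi--Fusco lower semicontinuity for quasiconvex integrals on Sobolev spaces then yields $\liminf_j \int f(A+\nabla \tilde w_j)\,\mathrm{d}y \ge f(A)$, provided $\tilde w_j \to 0$ in $L^1$ and $\nabla \tilde w_j \wto 0$ in $L^p$, and the $p$-growth of $f$ absorbs the discrepancy on the exceptional set. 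The hard part here is to control simultaneously $\|\nabla w_j - \nabla \tilde w_j\|_{L^p}$ and $\|w_j - \tilde w_j\|_{L^1}$ in terms of $\mathcal{H}^{n-1}(S_{w_j})$, i.e.\ to transfer the smallness of the jump set into smallness of both the $L^p$-gradient error and the $L^1$-function error simultaneously; this quantitative extension is the technical heart of Kristensen's original argument.
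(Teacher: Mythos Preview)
The paper does not supply its own proof of this statement: Theorem~\ref{theo:Kristensen} is quoted as a known result and attributed to Kristensen~\cite{Kristensen:99}, with no argument given in the paper beyond the citation. So there is no ``paper's proof'' to compare against; you are sketching the content of the reference itself.

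Your outline follows the standard strategy (truncation to $SBV^p_\infty$, Fonseca--M\"uller blow-up, then a Lusin-type modification to pass to genuine Sobolev competitors and invoke Acerbi--Fusco), and you correctly identify the technical heart as the quantitative extension lemma that trades smallness of $\mathcal{H}^{n-1}(S_{w_j})$ for simultaneous $L^p$-gradient and $L^1$-function control of $w_j-\tilde w_j$. Two small comments: first, in the truncation step your identity $\nabla u_k^M=\nabla u_k$ holds on the set where each component satisfies $|(u_k)_i|\le M$, and your Chebyshev bound should be phrased accordingly (this is harmless). Second, your justification that $r^{-(n-1)}\sigma(\overline{Q_r(x_0)})\to 0$ for $\mathcal{L}^n$-a.e.\ $x_0$ is correct but stated tersely; it rests on the Besicovitch differentiation theorem giving a finite $\mathcal{L}^n$-density of the finite measure $\sigma$ at a.e.\ point. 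As you acknowledge, the extension step is not carried out here, so what you have is a roadmap rather than a proof; for the purposes of this paper a citation is all that was intended.
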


	For the surface part we use the following standard result in the $SBV$ setting, see \cite{AmbrosioBraides:90b,Ambrosio:90,Ambrosio:94} and cf.\ \cite[Theorem~5.22]{AmbrosioFuscoPallara:00}. We recall that for a compact set $K \subset \R^m$ a function $g : K \times K \times \R^n \to [0,\infty)$ is {\em jointly convex}, if 
	\[ g(x, y, \nu) 
	= \sup_{h \in \N} \big[ (V_h(x) - V_h(y)) \cdot \nu \big] \] 
	for a suitable choice of $V_h \in C(K; \R^n)$, $h \in \N$. For later use we remark that in the isotropic case $g(x, y, \nu) = g(x, y) |\nu|$ this amounts to requiring 
	\[ g(x, y) 
	= \sup_{h \in \N} |V_h(x) - V_h(y)| \] 
	for suitable $V_h \in C(K)$, $h \in \N$, while for functions $g(x, y, \nu) = g(\nu)$ only depending on the crack normal this is equivalent to having $g$ even, positively 1-homogeneous and convex.

	\begin{theorem}\label{theo:BV-ellipticity}
		Let $p > 1$, $K \subset \R^m$ compact and suppose that $g : K \times K \times \R^n \to [0,\infty)$ is jointly convex with $g(x, y, \nu) \ge c|\nu|$ for a constant $c > 0$ and all $(x, y) \in K^2$ with $x \ne y$ and $\nu \in \R^n$. Suppose $ \Omega \subset \mathbb{R}^n $ is a bounded open set and $(u_k) \subset SBV^p(\Omega;\R^m)$ is such that 
		\[ \| \nabla u_k \|_{L^p(\Omega; \R^{m \times n})} \le C 
		\quad \text{and} \quad 
		u_k \in K \text{ $\mathcal{L}^n$ a.e.} \] 
		for all $k$ and some constant $C > 0$ and $u_k \to u$ in $L^1(\Omega;\R^m)$. Then 
		\[ \liminf_{k \to \infty} \int_{S_{u_k}} g \big( u_k^+, u_k^-, \nu(u_k) \big) \, \mathrm{d}\mathcal{H}^{n-1} 
		\ge \int_{S_{u}} g \big( u^+, u^-, \nu(u) \big) \, \mathrm{d}\mathcal{H}^{n-1}. \]
	\end{theorem}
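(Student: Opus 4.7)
The bound $g(x,y,\nu) \ge c|\nu|$ lets me assume, WLOG along a subsequence, that the left-hand side is finite, which forces $\sup_k \mathcal{H}^{n-1}(S_{u_k}) < \infty$. Together with the $L^p$-bound on $\nabla u_k$ and $u_k \in K$ a.e., Theorem~\ref{theo:Ambrosio-compactness} produces, after extracting a further subsequence, a limit $u \in SBV^p(\Omega;K)$ with $u_k \to u$ both a.e.\ and in $L^1$, and $\nabla u_k \rightharpoonup \nabla u$ weakly in $L^p(\Omega;\R^{m \times n})$. Using the joint convexity representation and invoking monotone convergence in $N$, appending $V_{N+1} \equiv 0$, and mollifying each $V_h \in C(K;\R^n)$ (the uniform surface bound on $S_{u_k}$ absorbs the $L^\infty$-error under such approximations), it suffices to establish the inequality for $g_N(x,y,\nu) = \max_{h \le N}(V_h(x) - V_h(y)) \cdot \nu \ge 0$ with $V_h \in C^\infty_c(\R^m;\R^n)$.

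The central analytic step is the $SBV$ chain rule applied to the vector field $w_h^k := V_h \circ u_k \in SBV(\Omega;\R^n) \cap L^\infty(\Omega;\R^n)$:
\[
    \divergence(V_h \circ u_k)
    = \operatorname{tr}\bigl(\nabla V_h(u_k)\, \nabla u_k\bigr)\,\mathcal{L}^n + \mu_h^k,
    \qquad \mu_h^k := \bigl[V_h(u_k^+) - V_h(u_k^-)\bigr] \cdot \nu(u_k)\, \mathcal{H}^{n-1} \restrict S_{u_k}.
\]
The left-hand side converges distributionally to $\divergence(V_h \circ u)$ by $L^1$-convergence $V_h \circ u_k \to V_h \circ u$; on the right, $\nabla V_h(u_k) \to \nabla V_h(u)$ strongly in every $L^q$, $q < \infty$ (dominated convergence from a.e.\ convergence and the $L^\infty$-bound on $K$), which, paired with $\nabla u_k \rightharpoonup \nabla u$ weakly in $L^p$, yields weak convergence of the bulk product. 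Subtracting gives $\mu_h^k \to \mu_h$ in $\mathcal{D}'(\Omega)$, where $\mu_h$ is the analogous jump measure for $u$, and the uniform mass bound $|\mu_h^k|(\Omega) \le 2\|V_h\|_{L^\infty} \mathcal{H}^{n-1}(S_{u_k}) \le C$ promotes this via Banach--Alaoglu and uniqueness of the distributional limit to weak-$*$ convergence in the space of bounded Radon measures on $\Omega$.

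To conclude I would localize: for any $\phi_1,\dots,\phi_N \in C_c(\Omega;[0,\infty))$ with $\sum_h \phi_h \le 1$, the pointwise inequality $g_N(x,y,\nu) \ge \sum_h \phi_h(x_0)(V_h(x) - V_h(y)) \cdot \nu$ (valid because $g_N \ge 0$) evaluated at jump points of $u_k$ gives
\[
    \int_{S_{u_k}} g_N(u_k^+, u_k^-, \nu(u_k))\,\mathrm{d}\mathcal{H}^{n-1}
    \ge \sum_{h=1}^N \int_\Omega \phi_h\,\mathrm{d}\mu_h^k.
\]
Passing to the $\liminf$ via the weak-$*$ convergence of the $\mu_h^k$, then taking the supremum over admissible $(\phi_h)$, recovers $\int_{S_u} g_N\,\mathrm{d}\mathcal{H}^{n-1}$: one partitions $S_u$ into disjoint Borel sets $E_h$ on which the $h$-th term attains the maximum, extracts compact $K_h \subset E_h$ with $|\mu_h|(E_h \setminus K_h) < \delta$, encloses them in pairwise disjoint open neighbourhoods, and uses Urysohn to construct the $\phi_h$. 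Finally, sending $N \to \infty$ and reversing the mollification recovers the original $g$. The main obstacle is this last identification of the $\liminf$ with the target integral: the bulk/jump separation of $\divergence(V_h \circ u_k)$ depends critically on the weak-$L^p$ gradient bound supplied by Ambrosio's compactness, and the signed nature of the $\mu_h$ forces the sup-realization to rely on inner regularity and disjoint-support test functions rather than a direct Lusin-type approximation.
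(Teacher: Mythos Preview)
The paper does not give its own proof of Theorem~\ref{theo:BV-ellipticity}; it is quoted as a standard result from \cite{AmbrosioBraides:90b,Ambrosio:90,Ambrosio:94}, cf.\ \cite[Theorem~5.22]{AmbrosioFuscoPallara:00}. Your argument is correct and is exactly the classical proof found in those references: reduction to a finite family of smooth $V_h$ via monotone convergence and the uniform bound on $\mathcal{H}^{n-1}(S_{u_k})$, the $SBV$ chain rule for $V_h \circ u_k$ combined with the strong--weak pairing $\nabla V_h(u_k)\,\nabla u_k$ to obtain weak-$*$ convergence of the signed jump measures $\mu_h^k$, and finally the supremum-of-measures localization (\cite[Lemma~2.35]{AmbrosioFuscoPallara:00}) to recover $\int_{S_u} g_N(u^+,u^-,\nu(u))\,\mathrm{d}\mathcal{H}^{n-1}$.
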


For the basic density result in $SBV^p_\infty$ we define the set $\mathcal{W}(\Omega; \R^m)$ to be the space of functions $y \in SBV(\Omega;\R^m)$ such that 
\begin{itemize}
\item[(i)] $\mathcal{H}^{n-1} ( \overline{S_y} \setminus S_y ) = 0$,  
\item[(ii)] $\overline{S_y} \cap \Omega$ is the intersection of $\Omega$ with a finite union of $(n-1)$-dimensional simplices, 
\item[(iii)] $y \in W^{k,\infty}(\Omega \setminus \overline{S_y};\R^m)$ for every $k \in \N$. 
\end{itemize} 
The following density result is a special case of the main result of \cite{CortesaniToader:99}.
\begin{theorem}\label{theo:Cortesani}
Let $\Omega $ be a bounded open set with Lipschitz boundary. For every $u \in SBV^p_\infty(\Omega;\R^m) $ there exists a sequence $(u_k) \subset \mathcal{W}(\Omega; \mathbb{R}^m)$ with 
\begin{itemize}
\item[(i)] $u_k \to u$ in $L^1(\Omega;\R^m)$ and $ \limsup_{k \to \infty} \| u_k \|_\infty \leq \| u \|_\infty $, 
\item[(ii)] $\nabla u_k \to \nabla u$ in $L^p(\Omega;\R^{m \times n})$ and $ \nabla u_k(x) \to \nabla u(x) $ for $ \mathcal{L}^n $ a.e.\ $ x \in \Omega $,
\item[(iii)] $\int_{S_{u_k}} \psi(\nu(u_k))\, \mathrm{d}\mathcal{H}^{n-1} \to \int_{S_{u}} \psi(\nu(u))\, \mathrm{d}\mathcal{H}^{n-1}$, 
\item[(iii')] $\psi(\nu(u_k))\cdot \mathcal{H}^{n-1} \restrict S_{u_k}  \stackrel{*}{\rightharpoonup} \psi(\nu(u))\cdot \mathcal{H}^{n-1} \restrict S_{u}$ as Radon measures on $ \Omega $. 
\end{itemize} 
\end{theorem}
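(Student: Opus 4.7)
The plan is to adapt the strategy behind the Cortesani--Toader density theorem. Since $u \in SBV^p_\infty(\Omega;\R^m) \subset L^\infty(\Omega;\R^m)$, set $M := \|u\|_\infty$. The construction is local, so after extending $u$ to a slightly larger Lipschitz domain $\Omega'$ with $\Omega \css \Omega'$ (available since $\partial \Omega$ is Lipschitz) without increasing $\|u\|_\infty$ and keeping the total anisotropic jump measure arbitrarily close, it suffices to produce, for every $\eps > 0$, a function $u_\eps \in \mathcal{W}(\Omega;\R^m)$ satisfying all the claims up to error $\eps$; a diagonal argument then yields the desired sequence.

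The core step is to replace the rectifiable jump set $S_u$ by a finite polyhedral set. Since $S_u$ is $\mathcal{H}^{n-1}$-rectifiable with finite total measure, De Giorgi's blow-up theorem gives an approximate tangent plane and well-defined one-sided traces $u^\pm$ at $\mathcal{H}^{n-1}$-a.e.\ point of $S_u$. Using a Vitali-type covering, one extracts a finite family of pairwise disjoint $(n-1)$-discs $D_i$ lying in affine hyperplanes that together cover all but an $\eps$-small part of $S_u$, both in the isotropic $\mathcal{H}^{n-1}$ sense and in the weighted $\int \psi(\nu)\,\mathrm{d}\mathcal{H}^{n-1}$ sense, and on each of which the traces $u^\pm$ are approximately constant. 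Triangulating each $D_i$ yields the simplicial set that will serve as $S_{u_\eps}$. On the complement of a thin tubular neighborhood of this simplicial set, $u$ is (up to a negligible remainder, handled by a truncation-type argument) a $W^{1,p}$ map that one mollifies at a scale much smaller than the tube width; inside the tubes, $u_\eps$ is defined on each side of each simplex by smoothly extending the corresponding one-sided trace and then gluing via a partition of unity subordinate to the two half-tubes, with careful handling at the simplicial edges where several tubes meet (where one further shrinks them so the glued function belongs to $W^{k,\infty}$ on each component of $\Omega \setminus \overline{S_{u_\eps}}$).

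Convergences (i) and (ii) then follow from standard mollification estimates on the $W^{1,p}$-part and from uniform pointwise bounds on the constructed smooth extensions, which preserve the $L^\infty$ bound by $M(1+o(1))$; pointwise a.e.\ convergence of $\nabla u_\eps$ comes from the Lebesgue differentiation theorem applied to the mollifiers, since the bulk modifications concentrate on shrinking neighborhoods of the old jump set. The main obstacle is (iii) and, in particular, (iii'): showing that the simplicial approximation carries an anisotropic surface measure that converges weakly-$*$ to $\psi(\nu(u))\cdot\mathcal{H}^{n-1}\restrict S_u$, not merely in total mass. This requires a covering argument that simultaneously approximates the total $\psi$-area and faithfully reproduces the spatial distribution of $\psi(\nu(u))\cdot\mathcal{H}^{n-1}\restrict S_u$. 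I would achieve this by iterating the blow-up at successively finer scales and applying a Besicovitch-type covering on the rectifiable set so that the simplicial replacement inside each selected disc matches both the area and the normal distribution of $S_u$ there; testing the resulting surface measures against an arbitrary $\varphi \in C_c(\Omega)$ via a uniform-continuity/$\eps$-net argument on $\varphi$ then yields (iii').
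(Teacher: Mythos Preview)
Your proposal is plausible in outline but takes a far longer route than the paper. The paper does not reconstruct the Cortesani--Toader approximation at all: it simply \emph{cites} \cite[Theorem~3.1 and Remark~3.2]{CortesaniToader:99} to obtain a sequence $(u_k) \subset \mathcal{W}(\Omega;\R^m)$ satisfying (i), (ii) and (iii), and then derives (iii') from (iii) by a soft measure-theoretic argument. Namely, since $\psi$ is a norm, Theorem~\ref{theo:BV-ellipticity} applied on any open $U \subset \Omega$ gives
\[
  \liminf_{k \to \infty} \int_{S_{u_k} \cap U} \psi(\nu(u_k))\,\mathrm{d}\mathcal{H}^{n-1}
  \;\ge\; \int_{S_u \cap U} \psi(\nu(u))\,\mathrm{d}\mathcal{H}^{n-1},
\]
and combining this open-set liminf inequality with the convergence of total masses in (iii), the Portmanteau theorem yields the weak-$*$ convergence (iii') immediately.

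The contrast is instructive: you identify (iii') as ``the main obstacle'' and propose an iterated blow-up/Besicovitch covering to force the simplicial measures to match the spatial distribution of $\psi(\nu(u))\,\mathcal{H}^{n-1}\restrict S_u$. This is unnecessary. Once (iii) is in hand, weak-$*$ convergence is automatic from lower semicontinuity on open sets --- a general principle worth internalising: total-mass convergence plus a liminf bound on all open subsets forces weak-$*$ convergence of nonnegative Radon measures. Your approach would, if carried through, give a self-contained proof independent of \cite{CortesaniToader:99}, but at the cost of redoing a technically delicate construction (the extension, the polyhedral covering, the gluing near simplicial edges) that the paper deliberately treats as a black box.
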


\begin{proof}
The existence of a sequence satisfying (i), (ii) and (iii) follows from \cite[Theorem~3.1 and Remark~3.2]{CortesaniToader:99}. This sequence also satisfies (iii'), as can be checked with the help of the Portmanteau theorem and combining  (iii) with the fact that, due to Theorem \ref{theo:BV-ellipticity},
\[
\liminf_{k \to \infty}\int_{S_{u_k} \cap U} \psi(\nu(u_k))\, \mathrm{d}\mathcal{H}^{n-1} \geq  \int_{S_{u}\cap U} \psi(\nu(u))\, \mathrm{d}\mathcal{H}^{n-1}
\] 
for every open set $U \subset \Omega$.
\end{proof} 

While this and related results have been widely used in Gamma convergence and relaxation results, for the sets in $\mathcal{F}(\Omega)$ will make use of a particular almost one-sided smooth approximation scheme, that has been rather recently established in \cite{ChenTorresZiemer:09,ComiTorres:17}. In the next theorem we summarize all statements on smooth approximation of sets of finite perimeter needed in the sequel. 

\begin{theorem}\label{theo:ComiTorres}
Let $E \subset \R^n$ be a bounded set of finite perimeter, $ s > 0 $ and let $ K \subset \mathbb{R}^n $ be a Borel set with $ \mathcal{H}^{n-1}(K) < \infty $. There exists a sequence of open sets $E_k$ with smooth boundaries such that for every Radon measure $\mu$ on $\R^n$ with $\mu \ll \mathcal{H}^{n-1}$ and each $r > 0$: 
\begin{itemize}
\item[(i)] $\mathcal{L}^n ( E_k \triangle E ) \to 0$, 
\item[(ii)] $ \psi(D \rchi_{E_k})(\mathbb{R}^n) \to \psi(D \rchi_E)(\mathbb{R}^n) $ and $\psi(D \rchi_{E_k}) \stackrel{*}{\rightharpoonup} \psi(D \rchi_E) $, 
\item[(iii)] $| \mu | \big( (E^1 \cup \partial^* E) \triangle \overline{E_k} \big) \to 0$, 
\item[(iv)] $\mathcal{H}^{n-1}(\partial E_k \cap K) =0 $ for every $ k $, 
 \item[(v)]  $\{x \in E : \operatorname{dist}(x, \R^n \setminus E) > r \} \subset \bigcup_k \bigcap_{m \ge k} E_m$,
\item[(vi)]  $\{x \in \R^n \setminus E : \operatorname{dist}(x, E) > r \} \subset \bigcup_k \bigcap_{m \ge k} (\R^n \setminus E_m)$,
\item[(vii)] $\{x  : \operatorname{dist}(x, \R^n \setminus E) \geq s \} \subset E_k \subset \{x :   \dist(x,E) < 2s\} $ for every $ k $.
\end{itemize} 
\end{theorem}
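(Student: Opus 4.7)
The plan is to construct the sequence $(E_k)$ by mollifying $\rchi_E$ and passing to superlevel sets at carefully chosen thresholds $t_k \to 1/2^-$. Fix a radially symmetric nonnegative mollifier $\rho_\eps$ with $\spt \rho_\eps \subset B_\eps$ and $\int \rho_\eps = 1$, and set $u_\eps := \rho_\eps \ast \rchi_E \in C^\infty(\R^n)$, so $0 \le u_\eps \le 1$. The Lebesgue density theorem together with Federer's blow-up analysis on the reduced boundary yields $u_\eps(x) \to 1$ for $\Leb{n}$-a.e.\ $x \in E^1$, $u_\eps(x) \to 0$ for $\Leb{n}$-a.e.\ $x \in E^0$, and $u_\eps(x) \to 1/2$ for $\Haus{n-1}$-a.e.\ $x \in \partial^* E$. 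Moreover $u_\eps \equiv 1$ on $\{\dist(\cdot,\R^n \setminus E) > \eps\}$ and $u_\eps \equiv 0$ on $\{\dist(\cdot,E) > \eps\}$.

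For each $\eps$ I would select $t_\eps \in (0,1/2)$ with $t_\eps \to 1/2$ so as to meet three compatible conditions, each excluding only a null or countable subset of thresholds and hence jointly realisable. First, by Sard's theorem $t_\eps$ can be taken as a regular value of $u_\eps$, so that $E_\eps := \{u_\eps > t_\eps\}$ is open and bounded with smooth boundary $\{u_\eps = t_\eps\}$. Second, since the level sets $\{u_\eps = t\}$ are pairwise disjoint and $\Haus{n-1}(K) < \infty$, only countably many $t$ can satisfy $\Haus{n-1}(\{u_\eps = t\} \cap K) > 0$; avoiding this exceptional set yields (iv). Third, combining the anisotropic coarea formula \eqref{Coarea},
\[
\psi(Du_\eps)(\R^n) = \int_0^1 \psi(D\rchi_{\{u_\eps > t\}})(\R^n)\, \mathrm{d}t,
\]
with $\psi(Du_\eps)(\R^n) \to \psi(D\rchi_E)(\R^n)$ (standard continuity of anisotropic perimeter under mollification) and the Reshetnyak lower bound $\liminf_\eps \psi(D\rchi_{\{u_\eps > t\}})(\R^n) \ge \psi(D\rchi_E)(\R^n)$ (valid for a.e.\ $t$ by $L^1$ convergence of level sets), Fatou's lemma and a diagonal extraction supply a choice with $\psi(D\rchi_{E_\eps})(\R^n) \to \psi(D\rchi_E)(\R^n)$; the weak-$*$ convergence in (ii) then follows from mass convergence together with Reshetnyak lower semicontinuity localised to arbitrary open sets. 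Properties (i), (v), (vi), (vii) follow directly from the $L^1$ convergence $u_\eps \to \rchi_E$ and the sharp distance identities satisfied by $u_\eps$: for $\eps < \min(r,s)$ the set-theoretic inclusions are immediate.

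The principal difficulty lies in (iii). The inclusion $\overline{E_\eps} \cap E^0 \to \emptyset$ pointwise $\Haus{n-1}$-a.e.\ is easy: at any $x \in E^0$ one has $u_\eps(x) \to 0$ while $t_\eps \to 1/2$, so $x \notin \{u_\eps \ge t_\eps\} = \overline{E_\eps}$ for all small $\eps$. The delicate direction is $(E^1 \cup \partial^* E) \setminus E_\eps \to \emptyset$ in $\Haus{n-1}$-measure: on $\partial^* E$ both $u_\eps(x)$ and $t_\eps$ converge to $1/2$, so no pointwise comparison works uniformly. To handle this I would apply Egorov's theorem on $\partial^* E$ (which has finite $\Haus{n-1}$-measure) to extract, for each $\delta > 0$, a subset $A_\delta$ with $\Haus{n-1}(\partial^* E \setminus A_\delta) < \delta$ on which $u_\eps \to 1/2$ uniformly, and then adjust $t_\eps$ via a diagonal argument to lie below the uniform modulus on $A_{\delta_\eps}$ with $\delta_\eps \to 0$; this guarantees $\rchi_{(E^1 \cup \partial^* E) \setminus E_\eps}(x) \to 0$ for $\Haus{n-1}$-a.e.\ $x$. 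Finally, the boundedness of $E$ confines every $E_\eps$ to a fixed ball on which $|\mu|$ is finite, so the absolute continuity $|\mu| \ll \Haus{n-1}$ and dominated convergence produce $|\mu|((E^1 \cup \partial^* E) \triangle \overline{E_\eps}) \to 0$, closing (iii).
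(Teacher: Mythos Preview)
Your overall strategy---mollify $\rchi_E$, take superlevel sets, use Sard for smoothness, the anisotropic coarea formula together with Fatou for the perimeter convergence, and pointwise density limits for the measure-theoretic statement (iii)---is exactly the route the paper takes. The difference is in how you handle the threshold. You let $t_\eps \nearrow 1/2$, which is what creates the ``race'' at $\partial^* E$ that you then try to win with Egorov. The paper instead keeps a \emph{fixed} $t \in (0,1/2)$, chosen once and for all from the full-measure set of levels that are simultaneously Sard-regular for every $k$, avoid the countable bad set for (iv), and realise the Fatou identity $\liminf_k \psi(D\rchi_{\{u_k>t\}})(\R^n)=\psi(D\rchi_E)(\R^n)$; it then passes to a $t$-dependent subsequence of $(\eps_k)$ to turn this $\liminf$ into a limit, which yields (ii). With $t<1/2$ fixed, (iii) becomes trivial: at every $x\in E^1$ one has $u_\eps(x)\to 1$, at every $x\in E^0$ one has $u_\eps(x)\to 0$, and at $\mathcal{H}^{n-1}$-a.e.\ $x\in\partial^* E$ one has $u_\eps(x)\to 1/2>t$, so in each case the indicator of $(E^1\cup\partial^* E)\triangle\overline{E_\eps}$ tends to $0$ pointwise $\mathcal{H}^{n-1}$-a.e., hence $|\mu|$-a.e., and dominated convergence on a fixed ball finishes the job. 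This is precisely the one-sided approximation result of \cite{ComiTorres:17} that the paper invokes. (The closure is handled by the sandwich $\{u_k>t\}\subset\overline{\{u_k>t\}}\subset\{u_k>s\}$ for $0<s<t<1/2$.)

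Your Egorov route is not wrong in spirit, but as written it leaves a real loose end: you now have \emph{four} constraints on $t_\eps$ (Sard, (iv), the coarea/Fatou selection for (ii), and the Egorov-derived upper bound $t_\eps<\inf_{A_{\delta_\eps}}u_\eps$), and you have not shown they are simultaneously satisfiable. In particular, the Fatou argument only tells you that for a.e.\ \emph{fixed} $t$ the $\liminf$ equals the target; turning this into a statement about a \emph{moving} $t_\eps$ that also obeys the Egorov rate needs an additional averaging argument over shrinking intervals near $1/2$, which you have not supplied. All of this disappears if you simply drop the requirement $t_\eps\to 1/2$, for which there is no need anywhere in the proof.
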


\begin{proof}
We fix a sequence $ \eps_k \searrow 0 $, we define $u_k = \rchi_E * \eta_{\eps_k} $ with the standard scaled mollifier $\eta_{\eps}$ and we set $ F^k_t = \{u_k > t\} $ for $ t \in (0,1) $. Since $ u_k \to \rchi_E $ in $ L^1(\Omega) $ and $ |D u_k|(\mathbb{R}^n) \to |D\rchi_{E}|(\mathbb{R}^n) $ (see \cite[p.\ 121]{AmbrosioFuscoPallara:00}), one has that $ \mathcal{L}^n(F^k_t \triangle E) \to 0  $ for all $ t \in (0,1) $ and 
\begin{equation*}
	\lim_{k \to \infty}\psi(D u_k)(\mathbb{R}^n) = \psi(D \rchi_E)(\mathbb{R}^n)  
\end{equation*}
by the Reshetnyak continuity theorem \cite[Theorem~2.39]{AmbrosioFuscoPallara:00}. Moreover it follows from the lower semicontinuity of the anisotropic total variation that for every open set $ U \subset \mathbb{R}^n $
\begin{equation*}
\liminf_{k \to \infty} \psi(D \rchi_{F^k_t})(U) 
\geq \psi(D \rchi_E)(U) \quad \textrm{for all $ t \in (0,1) $.} 
\end{equation*}
At this point we use the coarea formula in \eqref{Coarea} and Fatou's Lemma to obtain 
\begin{equation*}
\psi(D\rchi_{E})(\mathbb{R}^n) \geq \int_{0}^{1}\liminf_{k \to \infty}\psi(D\rchi_{F^k_t})(\mathbb{R}^n)\, \mathrm{d}t,
\end{equation*}
and we combine this inequality with Sard's Theorem to conclude that for $ \mathcal{L}^1 $ a.e.\ $ t \in (0,1) $ the sets $ F^k_t $ have smooth boundaries for each $ k \geq 1 $  and 
\begin{equation*}
	\liminf_{k \to \infty} \psi(D \rchi_{F^k_t})(\mathbb{R}^n) = \psi(D \rchi_E)(\mathbb{R}^n).
\end{equation*} 
Passing to a $t$-dependent subsequence that realizes the $\liminf$ as a limit and using the Portmanteau theorem we also conclude that $  \psi(D \rchi_{F^k_t})\stackrel{*}{\rightharpoonup}\psi(D \rchi_E)  $ for almost every $ t \in (0,1) $ along that sequence. Since $ \mathcal{H}^{n-1}(K) < \infty $, we infer that $ \mathcal{H}^{n-1}(\partial F^k_t \cap K) = 0 $ for every $ k \geq 1 $ and for all but countably many $ t \in (0,1) $. Moreover it is proved in \cite[Theorem~3.1]{ComiTorres:17} that $ | \mu | \big( (E^1 \cup \partial^* E) \triangle F^k_t \big) \to 0  $ for all $ t \in (0, 1/2) $. Noting that  $ F_k^t \subset \overline{F^k_t} \subset F_k^s $ for all $ 0 < s < t < \frac{1}{2} $, we conclude  $| \mu | \big( (E^1 \cup \partial^* E) \triangle \overline{F^k_t} \big) \to 0 $ for all $ t \in (0, \frac{1}{2}) $. In conclusion there exists $ t \in (0,\frac{1}{2}) $ such that for $ E_k = F^k_t $ for a suitable choice of indices $k$ depending on $t$ all the assertions in (i)-(iv) hold.   This choice also guarantees (v) and (vi). For all $ k $ sufficiently large also (vii) holds. 
\end{proof}

\section{Bulk model and relaxation}

Suppose $\Omega \subset \R^n$ is a bounded open set with Lipschitz boundary and $ 1 < p < \infty $. We associate to $y \in W^{1,p}(\Omega; \R^m)$ and any set $D \subset \Omega$ of finite perimeter an energy 
\[ \mathcal{E}(y, D)  
   =  \int_{\Omega \setminus D} W(\nabla y) \, \mathrm{d}x 
       + \int_{\Omega \cap \partial^\ast D} \psi(\nu(D))\, \mathrm{d}\mathcal{H}^{n-1}. \]
In this section we assume that $W: \mathbb{R}^{m \times n} \rightarrow \mathbb{R}$ is a Borel function which satisfies the growth condition 
\begin{align}\label{eq:p-growth}
  \bar{c} |X|^p - \bar{C} \le W(X) \le \bar{C} (1 + |X|^p) 
\end{align}
for constants $\bar{c}, \bar{C} >0$ and  $ \psi $ is an arbitrary norm on $ \mathbb{R}^n $ for which we can evidently assume $ \bar{c} |v| \leq \psi(v) \leq \bar{C} |v| $ for each $ v \in \mathbb{R}^n $.  The quasiconvex envelope $ W^{\rm qc} $ of $ W $, given by 
\begin{equation}\label{eq:Wqc-rep}
	W^{\rm qc}(X) = \inf\bigg\{   \int_{(0,1)^n}W(X + \nabla \varphi(x))\, \mathrm{d}x : \varphi \in C^\infty_c((0,1)^n, \mathbb{R}^m)  \bigg\}
\end{equation}
for every $ X \in \mathbb{R}^{m \times n} $, see \cite[Definition~6.3 and Remark~6.8]{BraidesDefranceschi:98}, satisfies the same growth condition.

Our first result identifies the relaxation of $\mathcal{E}$ with respect to $L^1$ convergence of $y$ and $\rchi_D$. It also allows for volume constraints and provides smooth recovery sequences. We set
\begin{equation*}
	\mathcal{C}(\Omega) = \{\emptyset\} \cup \big\{ A \cap \Omega: \textrm{$A \subset \mathbb{R}^n$ open with smooth boundary}, \, \mathcal{H}^{n-1}(\partial A \cap \partial \Omega) =0 \big\}.
\end{equation*}

\begin{theorem}\label{theo:bulk-relaxation}
The ($L^1$-)relaxation of $\mathcal{E}$ on $GSBV^p_1(\Omega; \R^{m}) \times \mathcal{F}(\Omega)$ is given by  
\[	\mathcal{E}^{\rm rel}(y, D) 
	=  \int_{\Omega \setminus D} W^{\rm qc} (\nabla y) \, \mathrm{d}x 
		+ 2  \int_{S_y \cap D^0} \psi(\nu(y))\, \mathrm{d}\mathcal{H}^{n-1} +  \int_{\Omega \cap \partial^\ast D} \psi(\nu(D))\, \mathrm{d}\mathcal{H}^{n-1}. 
\] 
More precisely, the following two assertions hold.
\begin{itemize} 
\item[(i)] Whenever $(y_k) \subset W^{1,p}(\Omega; \R^{m})$ and $(D_k) \subset \mathcal{F}(\Omega)$ are such that $y_k \to y$ in $L^1(\Omega; \R^{m})$ and $\rchi_{D_k} \to \rchi_{D}$ in $L^1(\Omega)$ for some $y \in GSBV^p_1(\Omega; \R^{m})$ and $D \in \mathcal{F}(\Omega)$, then one has 
\[ \liminf_{k \to \infty} \mathcal{E}(y_k, D_k) 
   \ge \mathcal{E}^{\rm rel}(y, D), \] 
 \item[(ii)]
	For each $(y, D) \in  GSBV^p_1(\Omega; \R^{m}) \times \mathcal{F}(\Omega)$ and $c_1, c_2, \ldots \in (0,\mathcal{L}^n(\Omega)]$ with $c_k \to \mathcal{L}^n(D)$ there are $(y_k) \subset C^{\infty}(\overline{\Omega}, \R^m)$ and $(D_k) \subset \mathcal{C}(\Omega)$ with $y_k \to y$ in $L^1(\Omega; \R^{m})$, $\rchi_{D_k} \to \rchi_{D}$ in $L^1(\Omega)$ and $\mathcal{L}^n(D_k) = c_k$ for all $k$ 
and
\[ \lim_{k \to \infty} \mathcal{E}(y_k, D_k) 
   = \mathcal{E}^{\rm rel}(y, D). \] 
\end{itemize}
\end{theorem}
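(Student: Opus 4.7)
\emph{Overall strategy.} Both assertions rest on encoding the pair $(y, D)$ into a single BV-type object so that the bulk, crack and perimeter contributions of $\mathcal{E}^{\rm rel}$ can be treated jointly: for (i) through the lower semicontinuity theorems~\ref{theo:Kristensen} and~\ref{theo:BV-ellipticity}, and for (ii) through the anisotropic Minkowski content (Lemma~\ref{area of distance sets}) combined with the density results of Section~\ref{section:BV}.

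\emph{Lim-inf, part (i).} After extracting a subsequence realising $\liminf \mathcal{E}(y_k, D_k)$ we may assume the uniform bounds $\|\nabla y_k\|_{L^p(\Omega \setminus D_k)} + \mathcal{H}^{n-1}(\partial^\ast D_k) \le C$ provided by \eqref{eq:p-growth}. For each $M > 0$, componentwise truncate $y_k$ to $y_k^M \in W^{1,p} \cap L^\infty$ and build an auxiliary map $z_k^M \in SBV^p_\infty(\Omega;\R^N)$, in the spirit of $z_k^M = (y_k^M \rchi_{\Omega \setminus D_k}, \rchi_{D_k})$ (possibly with additional vectorial components), whose weak gradient equals $\nabla y_k^M \rchi_{\Omega \setminus D_k}$ and whose jump set is $\partial^\ast D_k$. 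Theorem~\ref{theo:Ambrosio-compactness} yields $z_k^M \to z^M$ with $S_{z^M} \subseteq \partial^\ast D \cup (S_{y^M} \cap D^0)$ and no contribution from $S_y \cap D^1$. The bulk is bounded below by $\int_{\Omega \setminus D} W^{\rm qc}(\nabla y)\,\mathrm{d}x$ via Kristensen's theorem applied to $z_k^M$ with the quasiconvex integrand induced by $W^{\rm qc}$, together with $W \ge W^{\rm qc}$. For the surface term we invoke Theorem~\ref{theo:BV-ellipticity} with a carefully designed jointly-convex integrand $g$ that yields weight $\psi(\nu)$ on $\partial^\ast D$-type jumps and $2\psi(\nu)$ on $S_y \cap D^0$-type jumps; the factor $2$ is the geometric signature of two opposite walls of $\partial^\ast D_k$ collapsing onto a single crack in the limit. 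Sending $M \to \infty$ removes the truncation.

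\emph{Recovery, part (ii), smooth pairs.} Treat first $(y, D) \in \mathcal{W}(\Omega; \R^m) \times \mathcal{C}(\Omega)$ with $\mathcal{H}^{n-1}(\overline{S_y} \cap \partial D) = 0$. Define $D_k = (D \cup T_{\eps_k}(S_y \cap D^0)) \triangle B_k$, where $T_{\eps_k}$ is the $\psi$-anisotropic tubular neighborhood of width $\eps_k$ of the finite simplicial set $S_y \cap D^0$, and $B_k$ is a disjoint smooth ball chosen to enforce $\mathcal{L}^n(D_k) = c_k$ (its perimeter is $O(|c_k - \mathcal{L}^n(D)|^{(n-1)/n}) \to 0$). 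By Lemma~\ref{area of distance sets} on anisotropic Minkowski content the $\psi(\nu)$-weighted perimeter of $T_{\eps_k}$ converges to $2 \int_{S_y \cap D^0} \psi(\nu(y))\,\mathrm{d}\mathcal{H}^{n-1}$, while $\partial D$ contributes $\int_{\partial D} \psi(\nu(D))\,\mathrm{d}\mathcal{H}^{n-1}$. The deformation $y_k$ is obtained on $\Omega \setminus D_k$ (a region free of jumps of $y$) by superimposing on $y$ cell-problem correctors realising the infimum in \eqref{eq:Wqc-rep} on a fine covering, and extended smoothly to $\bar\Omega$; standard arguments (cf.\ \cite[Sect.~6]{BraidesDefranceschi:98}) yield $\int_{\Omega \setminus D_k} W(\nabla y_k)\,\mathrm{d}x \to \int_{\Omega \setminus D} W^{\rm qc}(\nabla y)\,\mathrm{d}x$.

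\emph{General pairs and main obstacle.} For arbitrary $(y, D) \in GSBV^p_1(\Omega; \R^m) \times \mathcal{F}(\Omega)$, we pass to the limit in $y$ and $D$ \emph{consecutively}, as emphasised in B). First, for fixed $D$, apply Theorem~\ref{theo:Cortesani} to obtain $(y_j) \subset \mathcal{W}(\Omega; \R^m)$ with $\mathcal{E}^{\rm rel}(y_j, D) \to \mathcal{E}^{\rm rel}(y, D)$, exploiting property~(iii'). Then, for each fixed $y_j$, apply Theorem~\ref{theo:ComiTorres} with $K = \overline{S_{y_j}}$ (which has finite $\mathcal{H}^{n-1}$ measure): property~(iv) gives the crucial non-overlap condition $\mathcal{H}^{n-1}(\partial D_i \cap \overline{S_{y_j}}) = 0$, while (i)--(iii) deliver $\mathcal{E}^{\rm rel}(y_j, D_i) \to \mathcal{E}^{\rm rel}(y_j, D)$. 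A standard diagonal extraction, combined with the smooth-pair recovery and the volume parameter $c_k$ inherited appropriately, produces the required sequence. The chief obstacle is precisely this non-overlap condition: without it the anisotropic Minkowski contribution around $S_{y_j}$ and the approximated perimeter $\partial D_i$ would overlap on a set of positive $\mathcal{H}^{n-1}$ measure, corrupting the accounting of the surface energy. This forces the two-step scheme enabled by the refined one-sided smooth approximation of Theorem~\ref{theo:ComiTorres} rather than a naive simultaneous mollification of both variables.
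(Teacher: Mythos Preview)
Your treatment of part~(i) is essentially the paper's argument with a cosmetic variation: you encode the pair via an extra indicator component $(y_k^M \rchi_{\Omega \setminus D_k}, \rchi_{D_k})$, whereas the paper shifts by a generic constant $c_y$ and works with the single function $\rchi_{\Omega \setminus D_k}(y_k - c_y)$, so that the marker ``one side equals $0$'' plays the role of your extra component. Both encodings feed into Theorems~\ref{theo:Kristensen} and~\ref{theo:BV-ellipticity} with a jointly convex $g$ that distinguishes one-sided jumps (weight $\psi$) from two-sided ones (weight $2\psi$).

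There is, however, a genuine gap in your recovery scheme for general pairs. You propose to approximate $y$ \emph{first} via Cortesani--Toader while $D$ is still an arbitrary set of finite perimeter, claiming $\mathcal{E}^{\rm rel}(y_j, D) \to \mathcal{E}^{\rm rel}(y, D)$ from property~(iii'). But weak-$*$ convergence of $\psi(\nu(y_j))\,\mathcal{H}^{n-1}\restrict S_{y_j}$ only yields
\[
\int_{S_{y_j} \cap D^0} \psi(\nu(y_j))\,\mathrm{d}\mathcal{H}^{n-1} \;\longrightarrow\; \int_{S_{y} \cap D^0} \psi(\nu(y))\,\mathrm{d}\mathcal{H}^{n-1}
\]
when the limit measure does not charge the topological boundary of $D^0$, and for a general $D \in \mathcal{F}(\Omega)$ there is no reason why $\mathcal{H}^{n-1}\big(S_y \cap \partial(D^0)\big) = 0$. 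Concretely, if a portion of $S_y$ lies in $D^1 \cup \partial^* D$ (so it does not contribute to $\mathcal{E}^{\rm rel}(y,D)$) but $D^1$ fails to be open there, the simplicial jump sets $S_{y_j}$ may slide into $D^0$ and produce $\limsup_j \int_{S_{y_j} \cap D^0}\psi > \int_{S_y \cap D^0}\psi$, destroying the upper bound.

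The paper avoids this by reversing your order. It first approximates $D$ by smooth open sets $D_k \in \mathcal{C}(\Omega)$ while keeping the \emph{original} $y$ fixed (Step~3), using Theorem~\ref{theo:ComiTorres}(iii) with $\mu = \psi(\nu(y))\,\mathcal{H}^{n-1}\restrict S_y$ to obtain $\int_{S_y \cap D_k^0}\psi \to \int_{S_y \cap D^0}\psi$ directly. Only afterwards, with $D_k$ smooth so that $D_k^0 = \Omega \setminus \overline{D_k}$ and a slight thickening of $D_k$ makes the complement closed, does Portmanteau apply and Cortesani--Toader can be invoked safely (Step~2). The non-overlap condition $\mathcal{H}^{n-1}(\partial D_i \cap \overline{S_{y_j}}) = 0$ that you single out as the ``chief obstacle'' is in fact a secondary technicality; the decisive reason for the consecutive scheme is the one above. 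A further minor omission: Theorem~\ref{theo:Cortesani} is stated for $SBV^p_\infty$, so a preliminary componentwise truncation of $y \in GSBV^p_1$ is required before it can be applied.
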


\begin{remark}\label{Lipschitz regularity}
The Lipschitz regularity of the boundary $ \partial \Omega $ is needed in the construction of the recovery sequence. The statement in \ref{theo:bulk-relaxation}(i) holds for every bounded open set $ \Omega $.
\end{remark}

\begin{remark}
	Assume that $y \in SBV^p_\infty(\Omega; \R^m)$ in Theorem~\ref{theo:bulk-relaxation}(ii). Then we can choose the recovery sequence $(y_k) $ so that it additionally satisfies 
	\begin{equation}\label{eq: L infty bound}
	\limsup_{k\to\infty}\|y_k\|_{L^\infty}\le \|y\|_{L^\infty}.
	\end{equation}
In fact, it is easy to check that the recovery sequences constructed in Step 1 and in Step 2 of the proof of in Theorem~\ref{theo:bulk-relaxation}(ii) below satisfy the $ L^\infty $ bound in \eqref{eq: L infty bound}.
\end{remark}

\begin{remark} 
Cracks outside of $D$ can develop as a result of asymptotically thin tubular neighborhoods of $S_y \cap D^0$ whose boundary area is asymptotically twice as big as the surface area of $S_y \cap D^0$, which explains the factor $2$ occuring in the first surface term in $\mathcal{E}^{\rm rel}(y,D)$. Indeed, our proof will show that for every energetically optimal `recovery' sequence $(y_k, \rchi_{D_k}) \to (y, \rchi_D)$ in $L^1(\Omega; \R^{m}) \times L^1(\Omega)$ such that $\lim_{k \to \infty} \mathcal{E}(y_k, D_k) = \mathcal{E}^{\rm rel}(y, D)$, one has 
\[\psi(D \rchi_{D_k}) 
    \stackrel{*}{\rightharpoonup} 2  \psi(\nu(y))\cdot \mathcal{H}^{n-1}\restrict (S_y \cap D^0)\, + \psi(D \rchi_D) \]
as Radon measures on $ \Omega $. In fact, as shown in the proof of Theorem \ref{theo:bulk-relaxation}(i) (see also Remark \ref{Lipschitz regularity}) on each open subset $ U \subset \Omega $ we readily obtain 
\begin{align*}
  \liminf_{k \to \infty} \int_{\Omega \setminus D_k}W(\nabla y_k)\, \mathrm{d}x 
  &\geq \int_{\Omega \setminus D}W^{\rm qc}(\nabla y)\, \mathrm{d}x, \\ 
  \liminf_{k \to \infty}\psi(D \rchi_{D_k})(U) 
  &\geq 2  \int_{S_y \cap D^0 \cap U}\psi(\nu(y)) \, \mathrm{d}\mathcal{H}^{n-1}  + \psi(D \rchi_D)(U). 
\end{align*} 
Thus,  
\begin{align*}
	\limsup_{k \to \infty}\psi(D \rchi_{D_k})(\Omega) & \leq \limsup_{k \to \infty} \mathcal{E}(u_k, D_k) - \liminf_{k \to \infty} \int_{\Omega \setminus D_k}W(\nabla y_k)\, \mathrm{d}x \\
	& \leq  \mathcal{E}(u,D) - \int_{\Omega \setminus D}W^{\rm qc}(\nabla y)\, \mathrm{d}x \\
	& =  2  \int_{S_y \cap D^0}\psi(\nu(y)) \, \mathrm{d} \mathcal{H}^{n-1}  + \psi(D \rchi_D)(\Omega)
\end{align*} 
and the conclusion follows from the Portmanteau theorem.
\end{remark}

Since $\mathcal{E}^{\rm (rel)}(y, D) = \mathcal{E}^{\rm (rel)}(\rchi_{\Omega \setminus D} y, D)$, the following compactness statement complements Theorem~\ref{theo:bulk-relaxation}. 
\begin{proposition}\label{prop:bulk-compactness} 
If $p, q > 1$ and $(y_k, D_k) \subset W^{1,p}(\Omega; \R^{m}) \times \mathcal{F}(\Omega)$ is a sequence with 
\begin{align*}
	\mathcal{E}(y_k, D_k) + \| y_k \|_{L^q(\Omega; \R^m)} \le C 
\end{align*}
then there is a $(y,D) \in GSBV^p_q(\Omega; \R^n) \times \mathcal{F}(\Omega)$ such that $y =\rchi_{\Omega \setminus D} y $ and, for a subsequence, 
\begin{align*} 
	\rchi_{\Omega \setminus D_k} y_k \to  \rchi_{\Omega \setminus D} y \quad \mbox{in } L^1(\Omega;\R^m) \quad\mbox{and}\quad 
	\rchi_{D_k} \to \rchi_{D} \quad \mbox{in } L^1(\Omega). 
\end{align*}
\end{proposition}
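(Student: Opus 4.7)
The plan is to extract the limit set $D$ by standard $BV$ compactness applied to the characteristic functions $\rchi_{D_k}$, and then to apply Ambrosio's theorem to the auxiliary sequence $v_k := \rchi_{\Omega \setminus D_k} y_k$ to produce $y$. The identity $y = \rchi_{\Omega\setminus D} y$ will fall out automatically once a.e.\ convergence of $v_k$ is combined with a.e.\ convergence of $\rchi_{D_k}$.

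First, I would use the growth assumption \eqref{eq:p-growth} together with $\bar c |v| \le \psi(v)$ to obtain, uniformly in $k$,
\[
\bar c \int_{\Omega\setminus D_k}|\nabla y_k|^p\,\mathrm{d}x + \bar c\, \mathcal{H}^{n-1}(\Omega\cap\partial^\ast D_k) \le \mathcal{E}(y_k,D_k)+\bar C\mathcal{L}^n(\Omega) \le C,
\]
so both the Euclidean perimeters of the $D_k$ and the $L^p$-norms of $\nabla y_k$ on $\Omega\setminus D_k$ are uniformly controlled. Standard $BV$-compactness for characteristic functions then yields some $D\in\mathcal{F}(\Omega)$ and a subsequence (not relabeled) with $\rchi_{D_k}\to\rchi_D$ in $L^1(\Omega)$, and, after passing to a further subsequence, also $\mathcal{L}^n$-a.e.\ in $\Omega$.

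Next, I would set $v_k := \rchi_{\Omega\setminus D_k}y_k$ and check componentwise that $v_k\in GSBV^p_q(\Omega;\R^m)$ with $\nabla v_k = \rchi_{\Omega\setminus D_k}\nabla y_k$ $\mathcal{L}^n$-a.e.\ and $S_{v_k}\subseteq \Omega\cap\partial^\ast D_k$. The standard argument truncates at height $M$: for $\phi_M(t)=(t\wedge M)\vee(-M)$, the function $\phi_M(v_k^i)=\phi_M(y_k^i)\rchi_{\Omega\setminus D_k}$ is a bounded Sobolev function times a bounded $BV$ characteristic function, and the $BV$ Leibniz rule places it in $SBV^p\cap L^\infty$ with the claimed approximate gradient and jump set. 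Together with $\|v_k\|_{L^q}\le\|y_k\|_{L^q}\le C$, this gives the uniform bound
\[
\|v_k\|_{L^q(\Omega;\R^m)} + \|\nabla v_k\|_{L^p(\Omega;\R^{m\times n})} + \mathcal{H}^{n-1}(S_{v_k}) \le C',
\]
so Theorem~\ref{theo:Ambrosio-compactness} (legitimate since $q>1$) furnishes a further subsequence converging to some $v\in GSBV^p_q(\Omega;\R^m)$ in $L^1(\Omega;\R^m)$ and $\mathcal{L}^n$-a.e. Combining this a.e.\ convergence with $\rchi_{D_k}\to\rchi_D$ a.e.\ shows $v=0$ $\mathcal{L}^n$-a.e.\ on $D$, hence $v=\rchi_{\Omega\setminus D}v$. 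Setting $y:=v$ gives the desired limit pair, since $\rchi_{\Omega\setminus D_k}y_k = v_k\to v = \rchi_{\Omega\setminus D}y$ in $L^1(\Omega;\R^m)$.

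The only genuine technicality I anticipate is the verification that $v_k\in GSBV^p$ with the correct approximate gradient and with $S_{v_k}\subseteq\partial^\ast D_k$; once this $BV$-calculus bookkeeping is in place, the proposition reduces to classical $BV$ compactness for sets of finite perimeter together with Ambrosio's compactness theorem, and no further delicate argument is needed.
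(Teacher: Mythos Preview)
Your proposal is correct and follows essentially the same route as the paper: extract $D$ via $BV$ compactness for the characteristic functions, then apply Ambrosio's compactness theorem to $v_k=\rchi_{\Omega\setminus D_k}y_k$ after bounding $\|\nabla v_k\|_{L^p}$, $\mathcal{H}^{n-1}(S_{v_k})$ and $\|v_k\|_{L^q}$ via the energy and growth conditions. The paper deduces $y=\rchi_{\Omega\setminus D}y$ from the product identity $v_k=\rchi_{\Omega\setminus D_k}\cdot v_k$ and convergence boundedly in measure of the first factor, while you use pointwise a.e.\ convergence of both $v_k$ and $\rchi_{D_k}$; these are equivalent bookkeeping choices.
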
 

We will henceforth denote by $c, C > 0$ generic, $k$-independent constants whose value might change from one occurrence to the next. 

\begin{proof} 
This is a direct consequence of the Compactness Theorem \ref{theo:Ambrosio-compactness}: As $(D_k) \subset \mathcal{F}(\Omega)$ with $\mathcal{H}^{n-1}(\Omega \cap \partial^\ast D_k) \le C/\bar{c}$ we immediately get a subsequence (not relabeled) such that $\rchi_{D_k} \to \rchi_{D}$ in $L^1(\Omega)$ for some $D \in \mathcal{F}(\Omega)$. Moreover, 
$\rchi_{\Omega \setminus D_k} y_k \in SBV^p(\Omega; \mathbb{R}^m)$ and, by the growth condition \eqref{eq:p-growth},  
\begin{align*}
  &\bar{c}\int_{\Omega} | \nabla (\rchi_{\Omega \setminus D_k} y_k)|^p \, \mathrm{d}x + \bar{c}\mathcal{H}^{n-1}(S_{\rchi_{\Omega \setminus D_k} y_k}) + \| \rchi_{\Omega \setminus D_k} y_k \|_{L^q(\Omega;\R^m)} \\ 
  &~~\le  \mathcal{E}(y_k, D_k) + \| y_k \|_{L^q(\Omega;\R^m)} + \bar{C} \mathcal{L}^n(\Omega) \leq C, 
\end{align*} 
so that, after passing to a further subsequence (not relabeled), $\rchi_{\Omega \setminus D_k} y_k \to y$ in $L^1(\Omega;\R^m)$ for a $y \in GSBV^p_q(\Omega;\R^m)$, and in combination with $\rchi_{\Omega \setminus D_k} \to \rchi_{\Omega \setminus D}$ in $L^1(\Omega)$ and thus boundedly in measure, we get that also
\[ \rchi_{\Omega \setminus D_k} y_k 
   = \rchi_{\Omega \setminus D_k} \cdot \rchi_{\Omega \setminus D_k} y_k \to \rchi_{\Omega \setminus D} y \]
in $L^1(\Omega;\R^m)$. This proves the claim.
\end{proof}

\begin{remark}
\begin{enumerate}
\item Without any additional bound on $y_k$, the control of the energy alone is not sufficient to guarantee compactness in $GSBV$ as the specimen may fracture into pieces and mass might escape to infinity. Remarkably, in such a situation one can still obtain compactness modulo rigid motions subordinate to a Caccioppoli partition of the domain, cf.\ \cite{Friedrich:19}. 

\item In physical applications with a bounded region containing $y_k(\Omega)$ one has $\| y_k \|_{L^{\infty}(\Omega;\R^m)} \le C$. The energy bound $\mathcal{E}(y_k, D_k) \le C$ then leads to a limiting deformation $y \in SBV^p_{\infty}(\Omega;\R^m)$. 
\end{enumerate}
\end{remark}

\begin{proof}[Proof of Theorem~\ref{theo:bulk-relaxation}(i)] 
We can assume $ \sup_{k \in \mathbb{N}} \mathcal{E}(y_k, D_k)< \infty $. Let $(y, D) \in GSBV^p_1(\Omega; \R^{m}) \times \mathcal{F}(\Omega)$ and suppose that $(y_k, D_k) \subset W^{1,p}(\Omega; \R^{m}) \times \mathcal{F}(\Omega)$ is such that $y_k \to y$ in $L^1(\Omega;\R^m)$ and $\rchi_{D_k} \to \rchi_D$ in $L^1(\Omega)$. Note that $\rchi_{\Omega \setminus D} y \in GSBV^p_1(\Omega; \R^m)$ as for each component  $(\rchi_{\Omega \setminus D} y_i)^M  = \rchi_{\Omega \setminus D} y_i^M \in SBV(\Omega)$ for any truncation parameter $M > 0$ and $\nabla (\rchi_{\Omega \setminus D} y_i) = \rchi_{\Omega \setminus D} \nabla y_i$ $\mathcal{L}^n$ a.e., $i = 1, \ldots, n$, since $\mathcal{L}^n(\{|y_i| > M \}) \to 0$ as $M \to \infty$. 

By passing to $W - W^{\rm qc}(0)$ we may without loss of generality assume that $W^{\rm qc}(0) = 0$. We fix a constant $c_y \in \R^m$ such that 
\begin{align*} 
  &\mathcal{H}^{n-1} \big( \{ x \in \partial^* D : y^+(x) = c_y \} \big) \\ 
  &~~+ \mathcal{H}^{n-1} \big( \{ x \in S_y : \{y^{+}(x), y^{-}(x)\} \ni c_y \} \big) 
   = 0, 
\end{align*}
where $y^\pm$ denotes the traces of $y$ on $S_y$, respectively, $\partial^* D$ and, in particular, $y^{+}$ is the outer trace of $y$ on $\partial^* D$.  We also define the functions  $\tilde{y}_k \in SBV^p_1(\Omega; \R^m)$ by 
\[	 \tilde{y}_k 
	 = \rchi_{\Omega \setminus D_k} (y_k - c_y) \] 
so that $\tilde{y}_k \to \tilde{y} = \rchi_{\Omega \setminus D} (y - c_y)$ in $L^1(\Omega;\R^m)$ and $\mathcal{H}^{n-1}(S_{\tilde{y}_k}\setminus \partial^* D_k) = 0$. Then 
\begin{align*}
  \int_{\Omega \setminus D} W(\nabla y_k) \, \mathrm{d}x 
  \ge \int_{\Omega} W^{\rm qc}(\nabla \tilde{y}_k) \, \mathrm{d}x  
\end{align*}
and for any truncation parameter $M > 0$ 
\begin{align*}
   \psi(D \rchi_{D_k})(\Omega)
	&\ge \int_{S_{\tilde{y}_k}} g(\tilde{y}_k^+, \tilde{y}_k^-, \nu(\tilde{y}_k))  \, \mathrm{d}\mathcal{H}^{n-1} \\
	&= \int_{S_{\tilde{y}_k^M}} g \big((\tilde{y}_k^M)^+, (\tilde{y}_k^M)^-, \nu(\tilde{y}_k^M) \big)  \, \mathrm{d}\mathcal{H}^{n-1}, 
\end{align*}  
where 
\[ g(x, y, \nu) = 
	\begin{cases} 
	0 &\mbox{if } x = y, \\
	\psi(\nu) &\mbox{if } x \ne y \mbox{ and } |x| \cdot |y| = 0, \\ 
	2\psi(\nu) &\mbox{if } x \ne y \mbox{ and } |x| \cdot |y| \ne 0 
\end{cases} \] 
and $\tilde{y}_k^M$ denotes the function obtained from $\tilde{y}_k$ through a componentwise truncation. It is not hard to see that $g$ is jointly convex on $K \times K \times \R^n$ for any compact subset $K$ of $\R^m$. As, for each fixed $M$, $S_{\tilde{y}_k^M} \subset S_{\tilde{y}_k}$, $|\nabla \tilde{y}_k^M| \le |\nabla \tilde{y}_k|$ $\mathcal{L}^n$ a.e.\ and $\tilde{y}_k^M \to \tilde{y}^M$ in $ L^1(\Omega; \mathbb{R}^m)$, we infer from Theorem \ref{theo:Kristensen}, Theorem \ref{theo:BV-ellipticity} and the monotonicity of the set family $\{S_{\tilde{y}^M}: M >0\} $ that
	\begin{align*}
		\liminf_{k \to \infty} \mathcal{E}(y_k, D_k) 
		\ge \int_{\Omega} W^{\rm qc}(\nabla \tilde{y}) \, \mathrm{d}x 
		+ \int_{S_{\tilde{y}}} g\big(\tilde{y}^+, \tilde{y}^-, \nu(\tilde{y})\big)  \, \mathrm{d}\mathcal{H}^{n-1}. 
	\end{align*}
	Finally we observe that
	\begin{align*}
		&\int_{\Omega} W^{\rm qc}(\nabla \tilde{y}) \, \mathrm{d}x 
		+ \int_{S_{\tilde{y}}} g\big(\tilde{y}^+, \tilde{y}^-, \nu(\tilde{y})\big)  \, \mathrm{d}\mathcal{H}^{n-1} \\
		& \quad  = \int_{D^0 \cap \Omega} W^{\rm qc}(\nabla y) \, \mathrm{d}x 
		+ W^{\rm qc}(0) \mathcal{L}^n(D) \\ 
		& \qquad + 2  \int_{S_y \cap D^0} \psi(\nu(y))\, \mathrm{d}\mathcal{H}^{n-1} +  \int_{\Omega \cap \partial^\ast D} \psi(\nu(D))\, \mathrm{d}\mathcal{H}^{n-1}.  
\end{align*} 
\end{proof}

We now focus on the construction of the recovery sequence. We start with the following general fact on the asymptotic behaviour of the area of tubular neighbourhoods around certain sufficiently regular Borel sets. For every $E \subset \mathbb{R}^n$ and $r > 0$ we write $ \delta_E(x) = \inf \{ \psi^\circ(y-x): y \in E  \} $ for $ x \in \mathbb{R}^n $, 
\begin{align*}
 U_r(E) =  \{ x \in \mathbb{R}^n : \delta_E(x) < r   \}
\end{align*}
and note that $\partial^\ast U_r(E) = \partial^\ast \{ x \in \mathbb{R}^n : \delta_E(x) \geq r \} \subset \partial U_r(E)  $ for every $ r > 0 $ and that $ \delta_E $ is a Lipschitz function.  Combining the anisotropic coarea formula in \eqref{Coarea} with results on the outer Minkowski content, see \cite{LussardiVilla:16}, we prove in the next Lemma a key estimate for the construction of the recovery sequence. 
\begin{lemma}\label{area of distance sets}
Suppose $E \subset \mathbb{R}^n$ is a bounded Borel set such that $\partial E$ is $\mathcal{H}^{n-1}$ rectifiable and there exists $c > 0$ such that 
\begin{align*}
  \mathcal{H}^{n-1} (B_r(x) \cap \partial E) 
  \ge c r^{n-1} \qquad \textrm{for every $ x \in \partial E $ and $ r \in (0,1) $.}
\end{align*}
If $ B \subset \mathbb{R}^n  $ is a Borel set with $\mathcal{H}^{n-1}(\partial E \cap \partial B) =0$, then 
\begin{flalign}\label{eq:tubular-estimate 2}
 & \liminf_{r \to 0}\psi(D \rchi_{U_r(E)})(B) \notag  \\
 & \qquad  \le 2  \int_{B \cap \partial E \cap E^0} \psi(\nu(\partial E))\, \mathrm{d}\mathcal{H}^{n-1} +  \int_{B \cap \partial^\ast E} \psi(\nu(E))\, \mathrm{d}\mathcal{H}^{n-1}.
\end{flalign}
\end{lemma}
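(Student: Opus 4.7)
The plan is to combine the anisotropic coarea formula \eqref{Coarea} with the outer Minkowski content theorem of \cite{LussardiVilla:16}. Coarea applied to $\delta_E$ up to level $r$ yields an integrated estimate from which a single good level $t_r \in (0,r)$ can be extracted via the mean value theorem, while Lussardi--Villa identifies the limit of the resulting volume ratio with the anisotropic surface measure appearing on the right-hand side of \eqref{eq:tubular-estimate 2}.

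More concretely, I would first observe that $\delta_E$ is $1$-Lipschitz with respect to $\psi^\circ$, which forces $\psi(\nabla \delta_E) \le 1$ $\mathcal{L}^n$-a.e.; moreover $\nabla \delta_E = 0$ a.e.\ on $E^1$ since $E^1 \subset \overline{E} = \{\delta_E = 0\}$ and $\delta_E \ge 0$ is Lipschitz. Since $\{\delta_E \ge t\} = \mathbb{R}^n \setminus U_t(E)$ and $\psi$ is even, $\psi(D\rchi_{\{\delta_E \ge t\}}) = \psi(D\rchi_{U_t(E)})$ for every $t > 0$. Applying \eqref{Coarea} to the truncation $\min\{\delta_E,r\}$ (whose a.e.\ gradient is $\rchi_{U_r(E)}\nabla\delta_E$) one obtains
\begin{equation*}
\int_0^r \psi(D\rchi_{U_t(E)})(B)\, \mathrm{d}t \;=\; \int_{B \cap U_r(E)} \psi(\nabla \delta_E)\, \mathrm{d}x \;\le\; \mathcal{L}^n\bigl(B \cap U_r(E) \setminus E^1\bigr).
\end{equation*}
The mean value theorem then supplies, for every $r > 0$, some $t_r \in (0,r)$ satisfying
\begin{equation*}
\psi(D\rchi_{U_{t_r}(E)})(B) \;\le\; \frac{\mathcal{L}^n(B \cap U_r(E) \setminus E^1)}{r}.
\end{equation*}

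To close the argument I would invoke the anisotropic outer Minkowski content formula of \cite{LussardiVilla:16}: under the $\mathcal{H}^{n-1}$-rectifiability of $\partial E$ and the uniform density lower bound $\mathcal{H}^{n-1}(B_r(x) \cap \partial E) \ge c r^{n-1}$, the above ratio converges as $r \to 0$ to the anisotropic surface measure on $\partial E$, carrying a prefactor $2$ at points of $\partial E \cap E^0$ (where $E$ has density zero, so the tube $U_r(E)$ wraps both sides of the approximate tangent plane to $\partial E$) and a prefactor $1$ on $\partial^\ast E$ (where $E$ has density $1/2$, so only the $E^0$-side contributes). The hypothesis $\mathcal{H}^{n-1}(\partial E \cap \partial B) = 0$ is precisely what lets the limit be localized to $B$ without spurious contributions from $\partial B$. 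The main obstacle is this last step: extracting from \cite{LussardiVilla:16} the localized identity for the volume ratio with the exact splitting between $\partial^\ast E$ and $\partial E \cap E^0$ and the prefactors $1$ and $2$; the coarea and mean value steps are routine once this asymptotic is in hand.
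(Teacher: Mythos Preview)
Your approach coincides with the paper's: coarea applied to $\delta_E$, the Lussardi--Villa outer Minkowski content formula, and extraction of a good level (you use the mean value theorem where the paper uses Fatou; you use only $\psi(\nabla\delta_E)\le 1$ where the paper proves equality on $\mathbb{R}^n\setminus\overline{E}$---your version suffices and is slightly cleaner).

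As for the obstacle you flag, the paper resolves it as follows. \cite{LussardiVilla:16} directly gives the \emph{global} limit $\lim_{r\to 0}r^{-1}\mathcal{L}^n(U_r(E)\setminus E)$, and localizing one inequality from its proof yields $\liminf_{r\to 0}r^{-1}\mathcal{L}^n\big((U_r(E)\setminus E)\cap A\big)\ge\mu(A)$ for every open $A$, where $\mu$ is the anisotropic surface measure on $\partial E$ carrying the factors $2$ on $\partial E\cap E^0$ and $1$ on $\partial^\ast E$. These two facts together force $r^{-1}\mathcal{L}^n\restrict(U_r(E)\setminus E)\stackrel{*}{\rightharpoonup}\mu$, and then Portmanteau with $\mu(\partial B)=0$ (equivalently $\mathcal{H}^{n-1}(\partial E\cap\partial B)=0$) delivers the localized limit on $B$.
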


\begin{proof}
It follows from \cite[Theorem~4.4]{LussardiVilla:16} that 
\begin{flalign*}
	&	\lim_{r \to 0}\frac{\mathcal{L}^n(U_r(E) \setminus E)}{r} \notag  \\
	& \qquad  =  2  \int_{\partial E \cap E^0} \psi(\nu(\partial E))\, \mathrm{d}\mathcal{H}^{n-1} +  \int_{\partial^\ast E} \psi(\nu(E))\, \mathrm{d}\mathcal{H}^{n-1}.
\end{flalign*}
Moreover if $ A \subset \mathbb{R}^n $ is an open set, localizing the proof of the inequality \cite[(4.8)]{LussardiVilla:16} on $A$, we infer that 
\begin{flalign*}
	&	\liminf_{r \to 0}\frac{\mathcal{L}^n\big((U_r(E) \setminus E) \cap A  \big)}{r} \notag  \\
	& \qquad  \geq   2  \int_{A \cap \partial E \cap E^0} \psi(\nu(\partial E))\, \mathrm{d}\mathcal{H}^{n-1} +  \int_{A \cap \partial^\ast E} \psi(\nu(E))\, \mathrm{d}\mathcal{H}^{n-1}.
\end{flalign*}
Since the left hand side of this inequality defines a family of Radon measures $ \mu_r $ and the right hand side a Radon measure $ \mu $,  we infer from the Portmanteau theorem that $ \mu_r $ weakly* converges to $ \mu $ and, noting that $ \mu(\partial B) =0 $, we conclude 
\begin{flalign}\label{eq:tubular-estimate}
	&	\lim_{r \to 0}\frac{\mathcal{L}^n\big((U_r(E) \setminus E) \cap B  \big)}{r} \notag  \\
	& \qquad  =  2  \int_{B \cap \partial E \cap E^0} \psi(\nu(\partial E))\, \mathrm{d}\mathcal{H}^{n-1} +  \int_{B \cap \partial^\ast E} \psi(\nu(E))\, \mathrm{d}\mathcal{H}^{n-1} := c'.
\end{flalign}
We notice\footnote{Evidently it holds $ |\delta_E(x)- \delta_E(y)| \leq \psi^\circ(x-y) $ for all $x,y \in \mathbb{R}^n $, whence we deduce that $ \langle \nabla \delta_E(x), v \rangle \leq 1 $ if $ \delta_E $ is differentiable at $ x \in \mathbb{R}^n $ and $ \psi^\circ(v) = 1$.  Moreover if $ \delta_E $ is differentiable at $ x \in \mathbb{R}^n \setminus \overline{E} $ and $ a \in \overline{E} $ with $ \psi^\circ(x-a) = \delta_E(x) $, then $ \delta_E(x + t (a-x)) = \delta_E(x) - t \delta_E(x) $ and, differentiating this equality in $ t =0 $, we obtain $ \langle \nabla \delta_E(x), x-a\rangle = \delta_E(x) $. Therefore we conclude that $ \psi(\nabla \delta_E(x)) = 1 $ for $ \mathcal{L}^n $ a.e.\ $ x \in \mathbb{R}^n \setminus \overline{E} $.} that $ \psi(\nabla \delta_E(x)) = 1 $ for $ \mathcal{L}^n $ a.e.\ $ x \in \mathbb{R}^n \setminus \overline{E}$. Therefore we can use the anisotropic coarea formula in \eqref{Coarea} with $ \delta_E $ and $ (U_r(E)\setminus \overline{E}) \cap B $ to compute
\begin{align*}
  \frac{\mathcal{L}^n\big((U_r(E) \setminus E)\cap B  \big)}{r} &= \frac{1}{r} \int_{(U_r(E)\setminus \overline{E}) \cap B}\psi(\nabla \delta_E)\, \mathrm{d}x \\
  & = \frac{1}{r}  \int_{0}^{r}  \psi\big(D \rchi_{U_t(E)}\big)(B)\,\mathrm{d}t\\
& = \int_{0}^{1}  \psi\big(D \rchi_{U_{tr}(E)}\big)(B)\,\mathrm{d}t.
\end{align*}
We infer from Fatou's Lemma  that
\begin{align*}
	 \int_{0}^{1}\liminf_{r \to 0} \psi\big(D \rchi_{U_{tr}(E)}\big)(B)\, \mathrm{d}t \leq c'
\end{align*}
and consequently there exists $ t_0 \in (0,1) $ such that 
\[ \liminf_{r \to 0} \psi\big(D \rchi_{U_{t_0r}(E)}\big)(B)\leq c'. \qedhere \]
\end{proof}

\begin{remark} 
The condition $\mathcal{H}^{n-1}(\partial E) < \infty$ is not necessary. It is enough to assume that $\partial E$ is countably $\mathcal{H}^{n-1}$ rectifiable and  there exists $c > 0$ and a Radon measure $\eta$ on $\mathbb{R}^n$ absolutely continuous with respect to $\mathcal{H}^{n-1}$ such that $\eta(B_r(x)) \ge c r^{n-1}$ for every $x \in \partial E$ and $r \in (0,1)$; see \cite[Theorem~4.4 and Remark~4.2]{LussardiVilla:16}.
\end{remark}

For easy reference we also state the following well known relaxation result for $ W $ which directly follows from \cite[Theorem~9.1]{Dacorogna:08} by using \eqref{eq:Wqc-rep} instead of \cite[Theorem~6.9]{Dacorogna:08} in the proof of that theorem. 
\begin{lemma}\label{lem: relaxation}
	Let $ U \subseteq \mathbb{R}^n $ be an arbitrary open set and $ p \leq q \leq \infty $. For every $ u \in W^{1, q}(U, \mathbb{R}^m) $ there exists a sequence $ \varphi_k \in C^\infty_c(U, \mathbb{R}^m) $ such that $ \varphi_k \to 0 $ in $ L^q(U, \mathbb{R}^m) $ and 
	\begin{equation*}
	\lim_{k \to \infty}\int_{U} W(\nabla(u + \varphi_k))\, \mathrm{d}x = \int_{U}W^{\rm qc}(\nabla u)\, \mathrm{d}x.
	\end{equation*}
\end{lemma}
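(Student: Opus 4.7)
The plan is to follow the structure of Dacorogna's proof of \cite[Thm.~9.1]{Dacorogna:08}, substituting the representation \eqref{eq:Wqc-rep} of $W^{\rm qc}$ in terms of $C^\infty_c((0,1)^n,\R^m)$ test functions for the affine-boundary representation used there. The lower bound $\liminf_k \int_U W(\nabla(u+\varphi_k))\,\mathrm{d}x \ge \int_U W^{\rm qc}(\nabla u)\,\mathrm{d}x$ is essentially automatic: pointwise $W \ge W^{\rm qc}$, and since the $p$-coercivity of $W$ together with the energy bound forces $\nabla\varphi_k$ bounded in $L^p$, convergence $\varphi_k \to 0$ in $L^q$ yields $\nabla\varphi_k \rightharpoonup 0$ in $L^p$ along a subsequence, so weak-$L^p$ lower semicontinuity of the quasiconvex functional $v \mapsto \int_U W^{\rm qc}(\nabla v)\,\mathrm{d}x$ gives the inequality. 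The main effort lies in the upper bound.

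\textbf{Construction for piecewise affine data.} The essential building block is the case $u(x) = Xx + a$ affine. Fix $\eps > 0$. By \eqref{eq:Wqc-rep} choose $\tilde\varphi \in C^\infty_c((0,1)^n,\R^m)$ with $\int_{(0,1)^n} W(X + \nabla\tilde\varphi(y))\,\mathrm{d}y \le W^{\rm qc}(X) + \eps$; crucially, $\tilde\varphi$ already has compact support in $(0,1)^n$, so after rescaling and tiling no matching across cell interfaces is required. For small $\delta > 0$ tile a compactly contained subset $K_\delta \css U$ by disjoint $\delta$-cubes $Q_j = \delta(z_j + (0,1)^n)$, $z_j \in \Z^n$, and set
\[
\varphi_{\delta,\eps}(x) = \delta\, \tilde\varphi\bigl((x - \delta z_j)/\delta\bigr) \quad\text{on } Q_j,
\]
extended by zero elsewhere. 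Then $\varphi_{\delta,\eps} \in C^\infty_c(U,\R^m)$ with $\|\varphi_{\delta,\eps}\|_{L^\infty} \le \delta \|\tilde\varphi\|_{L^\infty}$, and a change of variables on each cube plus summation produce
\[
\int_U W(X + \nabla \varphi_{\delta,\eps})\,\mathrm{d}x \le (W^{\rm qc}(X)+\eps)\,\Leb{n}(K_\delta) + \bar C(1+|X|^p)\,\Leb{n}(U \setminus K_\delta);
\]
a diagonal extraction in $(\delta,\eps,K_\delta)$ delivers the recovery sequence for affine $u$ and, by working cell-by-cell with disjoint compact supports, also for any piecewise affine $u$.

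\textbf{General $u$ and the main obstacle.} For general $u \in W^{1,q}(U,\R^m)$ I would approximate by piecewise affine $u_j$ with $u_j \to u$ in $W^{1,q}(U,\R^m)$ (with uniform gradient bounds if $q=\infty$, and $\nabla u_j \to \nabla u$ pointwise $\Leb{n}$-a.e.\ along a subsequence). By the previous step each $u_j$ admits a smooth, compactly supported perturbation $\tilde\varphi_{j,k}$ with $\tilde\varphi_{j,k} \to 0$ in $L^q$ and $\int_U W(\nabla u_j + \nabla\tilde\varphi_{j,k})\,\mathrm{d}x \to \int_U W^{\rm qc}(\nabla u_j)\,\mathrm{d}x$ as $k \to\infty$. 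Continuity of $W^{\rm qc}$ together with the $p$-growth bound and dominated convergence give $\int_U W^{\rm qc}(\nabla u_j)\,\mathrm{d}x \to \int_U W^{\rm qc}(\nabla u)\,\mathrm{d}x$, and a diagonal extraction $j(k)\to\infty$ is then expected to produce the desired $\varphi_k$. The main obstacle I anticipate is precisely this transfer from $u_j$ to $u$: since $W$ is only Borel, continuity of $\int_U W(\nabla\,\cdot\,)\,\mathrm{d}x$ along $u_j \to u$ in $W^{1,p}$ is not available to lean on, and the diagonalisation must instead be carried out carefully at the gradient level, exploiting that the local construction in each cube depends only on the (essentially constant) value of $\nabla u_j$ there. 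In practice this is achieved by refining the cube grid of $u_j$ simultaneously with $j$, so that the discrepancy between $u$ and its piecewise affine cell-wise replacement is absorbed into the subleading term of the $p$-growth estimate on the complementary strip.
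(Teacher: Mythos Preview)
Your proposal is correct and coincides with the paper's approach: the paper does not spell out a proof but simply records that the lemma ``directly follows from \cite[Theorem~9.1]{Dacorogna:08} by using \eqref{eq:Wqc-rep} instead of \cite[Theorem~6.9]{Dacorogna:08} in the proof of that theorem,'' which is precisely the substitution you describe and carry out. Your write-up merely unpacks what that citation entails (the $C^\infty_c$ tiling construction on cubes, the piecewise-affine approximation, and the diagonal extraction), and your treatment of the Borel-only $W$ and of the case $q=\infty$ is in line with how the argument in Dacorogna is meant to be adapted.
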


\begin{proof}[Proof of Theorem~\ref{theo:bulk-relaxation}(ii)] 
We proceed in consecutive steps. 
\smallskip 

\noindent{\em Step 1.} 
 Firstly we treat the case $y \in \mathcal{W}(\Omega; \mathbb{R}^m)$ (see Theorem \ref{theo:Cortesani}) and $ D \in \mathcal{C}(\Omega)$, where $D = A \cap \Omega$ with $A \subset \mathbb{R}^n$ open with smooth boundary such that $ \mathcal{H}^{n-1}(\partial A \cap \partial \Omega) =0 $. Suppose that $0 < c_k \le \mathcal{L}^n(\Omega)$ with $\lim_{k \to \infty} c_k = \mathcal{L}^n(D)$. We define $ E = A \cup S_y $ and we notice that $ E^0 = \mathbb{R}^n \setminus \overline{A} $, $ \partial^* E = \partial A $ and $ \partial E = \partial A \cup (\overline{S_y} \setminus \overline{A}) $ and $ \mathcal{H}^{n-1}(\partial E \cap \partial \Omega) =0 $. In particular it follows from Lemma \ref{area of distance sets} that there exists a positive  sequence $ \eta_k \to 0 $ such that 
	\begin{align*}
\lim_{k \to \infty} \psi\big(D \rchi_{U_{\eta_k}(E)}\big)(\overline{\Omega}) \leq c'
	\end{align*}
with $\int_{\Omega \cap \partial D}\psi(\nu(A))\, \mathrm{d}\mathcal{H}^{n-1} + 2 \int_{S_y \setminus \overline{D}} \psi(\nu(y))\, \mathrm{d}\mathcal{H}^{n-1} : = c' $. If $ D = \emptyset $, then  we need to select a further subsequence (not relabeled) of $(\eta_k) $ such that $ \eta_k \ll c_k $. Notice that $ \rchi_{U_{\eta_k}(E)} \to \rchi_E $ in $ L^1(\mathbb{R}^n) $.   Approximating the sets $U_{\eta_k}(E) $ with smooth sets by means of Theorem \ref{theo:ComiTorres}, we find a sequence $(E_k)$ of open sets in $ \mathbb{R}^n $ with smooth boundaries such that $ \mathcal{H}^{n-1}(\partial E_k \cap \partial \Omega) =0 $ for every $ k \geq 1 $, $ \rchi_{E_k} \to \rchi_{E} $ in $ L^1(\mathbb{R}^n) $,  $ U_{\eta_k/2}(E) \subset E_k  \subset U_{3\eta_k/2}(E) $ and
\begin{equation}\label{eq:Dktilde-perim-est}
\lim_{k \to \infty} \psi\big(D \rchi_{E_k}\big)(\overline{\Omega}) \leq c'.
\end{equation} 
It follows that
\begin{align}\label{eq:Dktilde-vol-est}
  \mathcal{L}^n(E_k \cap \Omega) = \mathcal{L}^n(D) + O(\eta_k).
\end{align}
Now, if $D \ne \Omega$, fix $x_0 \in \Omega \setminus \overline{E}$ and,  if $D \ne \emptyset$, $y_0 \in D$. Then, noting that  $ \mathcal{L}^n(E_k \cap \Omega) - c_k \to 0 $, we set 
\[ \tilde{E}_k 
   = \big( E_k \cup B_{\sigma_k}(x_0) \big) \setminus \overline{B_{\tau_k}(y_0)} \] 
with null sequences $ \sigma_k, \tau_k \geq 0$ such that $ \overline{B_{\sigma_k}(x_0)} \subset \Omega \setminus \overline{E} $, $ \overline{B_{2\tau_k}(y_0)} \subset D $ and
\begin{align*}
\mathcal{L}^n(E_k \cap \Omega) - c_k =  \mathcal{L}^n(B_{1}) \tau_k^n -  \mathcal{L}^n(B_{1}) \sigma_k^n \quad \textrm{for all $ k $ sufficiently large.}
\end{align*}
If $ D = \emptyset $ this choice is possible with $\tau_k =0$ (and $B_{\tau_k}(y_0) = \emptyset$) for every $k$ as in this case $\mathcal{L}^n(E_k \cap \Omega) - c_k = O(\eta_k)- c_k $  by \eqref{eq:Dktilde-vol-est} and $ \eta_k \ll c_k $. In case $ D = \Omega $ we understand that  $B_{\sigma_k}(x_0) = \emptyset$ and use that $c_k \le \mathcal{L}^n(\Omega)$ by assumption. The modified sets $\tilde{E}_k$ still satisfy
\begin{align}\label{eq:Dk-perim-est}
\lim_{k \to \infty} \psi\big(D \rchi_{\tilde{E}_k}\big)(\overline{\Omega}) \leq c'
    \end{align}
by \eqref{eq:Dktilde-perim-est} and 
$ \rchi_{\tilde{E}_k} \to \rchi_E $ in $L^1(\mathbb{R}^n)$  with
\[
   \mathcal{L}^n(\tilde{E}_k \cap \Omega) 
   = \mathcal{L}^n(E_k \cap \Omega) + \alpha_n \sigma_k^n - \alpha_n \tau_k^n = c_k\]
for $k$ sufficiently large enough, as then $\overline{B_{\sigma_k}(x_0)} \cap E_k = \emptyset$ and $\overline{B_{\tau_k}(y_0)} \subset E_k$.

Since $ y \in W^{l, \infty}(\Omega \setminus \overline{S_y}, \mathbb{R}^m) $ for every $ l \geq 1 $, we can apply the relaxation result in Lemma \ref{lem: relaxation} to find a sequence  $\varphi_k \in C^{\infty}(\mathbb{R}^n; \R^m)$ such that $ \spt \varphi_k \subset \Omega \setminus \overline{E_k} $, $ \|\varphi_k \|_{L^\infty(\Omega \setminus \overline{E_k}, \mathbb{R}^m)} \leq \frac{1}{k}$ and 
\begin{align*}
  \int_{\Omega \setminus \overline{E_k}} W (\nabla(y + \varphi_k)) \, \mathrm{d} x 
  \le \int_{\Omega \setminus \overline{E_k}} W^{\rm qc} (\nabla y) \, \mathrm{d} x + \frac{1}{k}. 
\end{align*} 
 For each $ k \geq 1 $ we select cut-off functions $ \theta_k \in C^\infty(\mathbb{R}^n) $ such that $ 0 \leq \theta_k \leq 1 $, $ \theta_k = 1 $ on $ \Omega \setminus \overline{E_k} $, $ \theta_k =0 $ on a neighbourhood of $ J_y $  and $ \rchi_{\{\theta_k < 1\}} \to 0 $ in $ L^1(\mathbb{R}^n) $ as $ k \to \infty $. We define $ \tilde{y}_k = \theta_k(y + \varphi_k) $ and we notice that $ \tilde{y}_k \in W^{l, \infty}(\Omega, \mathbb{R}^m) $ for every $ l \geq 1 $, $ \tilde{y}_k \to y $ in $ L^1(\Omega) $ and 
\begin{equation*}
\int_{\Omega \setminus E_k} W (\nabla \tilde{y}_k) \, \mathrm{d} x 
\le \int_{\Omega \setminus E_k} W^{\rm qc} (\nabla y) \, \mathrm{d} x + \frac{1}{k}.
\end{equation*}
Using Stein's extension theorem we obtain that $ \tilde{y}_k \in C^\infty(\overline{\Omega}, \mathbb{R}^m) $; see \cite[Theorem~5 in Chap.~VI]{Stein:70}.
Choosing $y_k = \zeta_k \tilde{y}_k$, where $\zeta_k \in C^{\infty}(\R^n)$ is a cut-off function with $0 \le \zeta_k \le 1$ on $\R^n$ and $\zeta_k = 0$ on $B_{\tau_k}(y_0)$ and $\zeta_k = 1$ on $\R^n \setminus B_{2\tau_k}(y_0)$,  and also using \eqref{eq:p-growth} and \eqref{eq:Dktilde-vol-est}, we obtain 
\begin{align*}
  \int_{\Omega \setminus \tilde{E}_k} W (\nabla y_k) \, \mathrm{d} x 
  &\le \int_{\Omega \setminus E_k} W(\nabla \tilde{y}_k) \, \mathrm{d} x  + C \sigma_k + \mathcal{L}^n(B_{\tau_k}(y_0)) W(0) \notag \\ 
  &\le \int_{\Omega \setminus D} W^{\rm qc} (\nabla y) \, \mathrm{d} x + \frac{1}{k} + C (\sigma_k + \eta_k) + \mathcal{L}^n(B_{\tau_k}(y_0)) W(0), 
\end{align*} 
while still $y_k \to y$ in $L^1(\Omega; \R^m)$. Combining with \eqref{eq:Dk-perim-est}  and setting $D_k = \tilde{E}_k \cap \Omega$ we find that indeed 
\[ \limsup_{k \to \infty} \mathcal{E}(y_k, D_k) \le \mathcal{E}^{\rm rel}(y, D), \]
which concludes the proof of Step 1. 
\smallskip 

\noindent{\em Step 2.} 
Next we consider $(y, D) \in  GSBV^p_1(\Omega;\R^m) \times \mathcal{C}(\Omega)$, say $D = A \cap \Omega$ with a bounded set $A$ with smooth boundary such that $ \mathcal{H}^{n-1}(\partial A \cap \partial \Omega) =0 $. 

We notice that for the functions $y^k = (y_1^k, \ldots, y_n^k)$ with cut-off components one has $ y^k \in SBV^p_\infty(\Omega; \mathbb{R}^m) $, $y^k \to y$ in $L^1(\Omega; \R^m)$ and, according to Section~\ref{section:BV}, $S_y = \bigcup_{k = 1}^\infty S_{y^k}$, where the family $ k \mapsto S_{y^k} $ is increasing, and $\nabla y^k(x) \to \nabla y(x)$ for $\mathcal{L}^n$ a.e.\ $ x \in \Omega $, where $|W^{\rm qc}(\nabla y^k)| \le C |\nabla y^k|^p + C \le C |\nabla y|^p + C \in L^1(\Omega; \R^{m \times n})$ due to \eqref{eq:p-growth}. By monotone and dominated convergence we infer that 
\[ \lim_{k \to \infty} \mathcal{E}^{\rm rel}(y^k, D) 
= \mathcal{E}^{\rm rel}(y, D). \] 
Therefore we can assume $ y \in SBV^p_\infty(\Omega; \mathbb{R}^m) $ and in view of Step~1, by invoking a diagonal sequence argument, it is now sufficient to provide a sequence $(y_k, D_k) \subset \mathcal{W}(\Omega; \mathbb{R}^m) \times \mathcal{C}(\Omega)$ such that $(y_k, \rchi_{D_k}) \to (y, \rchi_D) $ in $L^1(\Omega; \R^m) \times L^1(\Omega)$ with $\mathcal{L}^n(D_k) = \mathcal{L}^n(D) $ for each $ k \geq 1 $ and
\begin{align}\label{eq:step-two-upper} 
  \limsup_{k \to \infty} \mathcal{E}^{\rm rel}(y_k, D_k) 
  \le \mathcal{E}^{\rm rel}(y, D). 
\end{align} 
We choose approximations $y_k \to y$ in $L^1(\Omega; \R^m)$ as in Theorem~\ref{theo:Cortesani}. If $D =  \emptyset$ we immediately obtain \eqref{eq:step-two-upper} with $D_k = D = \emptyset$. In case $D \ne \emptyset$, arguing as in Step 1 with the help of Lemma \ref{area of distance sets} and Theorem \ref{theo:ComiTorres} we find a positive sequence $ \eta_k \to 0 $ and a sequence $ (A_k) $ of open subsets of $ \mathbb{R}^n $ with smooth boundaries such that $ \mathcal{H}^{n-1}(\partial A_k \cap \partial \Omega) =0 $ for every $ k \geq 1 $, $ \rchi_{A_k} \to \rchi_A $ in $ L^1(\mathbb{R}^n) $, $ U_{\eta_k/2}(A) \subset A_k \subset U_{3\eta_k/2}(A) $ and
\begin{equation*}
	\lim_{k \to \infty}\psi(D \rchi_{A_k})(\overline{\Omega})\leq \psi(D \rchi_A)(\Omega).
\end{equation*}
We choose $ y_0 \in D $ and $ \tau_k > 0 $ so that $\alpha_n \tau_k^n = \mathcal{L}^n( (A_k \cap \Omega)\setminus D)$. We set 
\begin{equation*}
D_k = (A_k \cap \Omega)\setminus \overline{B_{\tau_k}(y_0)}.
\end{equation*}
Note that $\overline{B_{\tau_k}(y_0)} \subset D$ if $k$ is large enough and then $\mathcal{L}^n(D_k) = \mathcal{L}^n(D)$. We have 
\[ \lim_{k \to \infty} \lim_{h \to \infty} \int_{\Omega \setminus D_k} W^{\rm qc} (\nabla y_h) \, \mathrm{d} x 
   = \lim_{k \to \infty} \int_{\Omega \setminus D_k} W^{\rm qc} (\nabla y) \, \mathrm{d} x 
   = \int_{\Omega \setminus D} W^{\rm qc} (\nabla y) \, \mathrm{d} x \] 
by Theorem~\ref{theo:Cortesani}(ii) and the dominated convergence theorem; moreover
\begin{align*}
  &\limsup_{k \to \infty} \limsup_{h \to \infty} \int_{S_{y_h}  \setminus \overline{D_k}} \psi(\nu(y_h))\, \mathrm{d}\mathcal{H}^{n-1}
  \le \limsup_{k \to \infty} \limsup_{h \to \infty} \int_{S_{y_h}  \setminus D_k} \psi(\nu(y_h))\, \mathrm{d}\mathcal{H}^{n-1} \\
  &~~\le \limsup_{k \to \infty} \int_{S_{y}  \setminus D_k} \psi(\nu(y))\, \mathrm{d}\mathcal{H}^{n-1} 
  = \int_{S_{y}  \setminus \overline{D}} \psi(\nu(y))\, \mathrm{d}\mathcal{H}^{n-1}
\end{align*}
by Theorem~\ref{theo:Cortesani}(iii') and dominated convergence as  $\rchi_{D_k}(x) \to \rchi_{\overline{D}}(x)$ for all $ x \in \Omega \setminus \{y_0\} $. We conclude that
\[ \limsup_{k \to \infty}\limsup_{h \to \infty} \mathcal{E}^{\rm rel}(y_h, D_k) 
   \le \mathcal{E}^{\rm rel}(y, D), \] 
   whence we can easily obtain the sequence satisfying \eqref{eq:step-two-upper}.
\smallskip

\noindent{\em Step 3.} 
In this crucial step we consider the case $(y, D) \in GSBV^p_1(\Omega;\R^m) \times \mathcal{F}(\Omega)$ such that both $D$ and $\Omega \setminus D$ have nonempty interior. We have to provide a sequence $D_k \in \mathcal{C}(\Omega)$ with $\mathcal{L}^n(D_k) = \mathcal{L}^n(D)$ such that 
\[ \lim_{k \to \infty} \mathcal{E}^{\rm rel}(y, D_k) 
   = \mathcal{E}^{\rm rel}(y, D). \]

To this end, we first extend the set $D$ to a set $E$ with $E \cap \Omega = D$ in such a way that $\mathcal{H}^{n-1}(\partial^* E \cap \partial \Omega) = 0$ following the procedure outlined in \cite[Remark 3.43]{AmbrosioFuscoPallara:00}: since $\Omega$ is a Lipschitz domain, the function $\rchi_D$ can be extended to a mapping $f \in BV(\R^n)$ with compact support such that $|Df|(\partial \Omega) = 0$. We note that for a.e.\ choice of $s \in (0,1)$ the set $F_s = \{ x \in \R^n : f(x) > s \}$ is of finite perimeter and satisfies $\mathcal{H}^{n-1} (\partial^* F_s \cap \partial \Omega) = |D \rchi_{F_s}|(\partial \Omega) = 0$ by the coarea formula $|Df|(\partial \Omega) = \int_{-\infty}^{\infty} |D \rchi_{F_s}|(\partial \Omega) \, \mathrm{d}s$. Fixing such an $s$ we set $E = F_s$. 

We then choose an approximating sequence $E_k$ for $E$ as in Theorem~\ref{theo:ComiTorres} such that  $\mathcal{H}^{n-1}(\partial E_k \cap \partial \Omega) =0$ for every $ k \in \N $. We fix $x_0 \in \into (\Omega \setminus D)$, $y_0 \in \into D$ and set  $ \rho = \min\{ \dist(x_0, (\R^n \setminus \Omega) \cup E), \dist(y_0, \mathbb{R}^n \setminus D)\} $ and
\[ D_k 
   = \big( (E_k \cap \Omega) \cup B_{\sigma_k}(x_0) \big) \setminus \overline{B_{\tau_k}(y_0)} \in \mathcal{C}(\Omega),
\]
where  $0 \le \sigma_k, \tau_k \le \rho/2$ with $\sigma_k \to 0$, $\tau_k \to 0$ are chosen such that $\alpha_n(\sigma_k^n - \tau_k^n) = \mathcal{L}^n(D) - \mathcal{L}^n(E_k \cap \Omega)$ for large $k$. Since $B_{\sigma_k}(x_0) \cap E_k = \emptyset$ and $B_{\tau_k}(y_0) \subset E_k \cap \Omega$ for $ k $ sufficiently large by Theorem~\ref{theo:ComiTorres}(v)-(vi), this in particular yields $\mathcal{L}^n(D_k) = \mathcal{L}^n(D)$ for all $ k  $ large enough. 

By Theorem~\ref{theo:ComiTorres}(i)  and dominated convergence we have 
\[ \lim_{k \to \infty} \int_{\Omega \setminus D_k} W^{\rm qc} (\nabla y) \, \mathrm{d} x 
   = \int_{\Omega \setminus D} W^{\rm qc} (\nabla y) \, \mathrm{d} x. \] 
Since $\mathcal{H}^{n-1} (\partial^* E \cap \partial \Omega) = 0$, then $ \psi(D \rchi_E)(\partial \Omega) =0 $ and from Theorem~\ref{theo:ComiTorres}(ii) we get 
\[ \lim_{k \to \infty} \psi(D \rchi_{E_k})(\Omega) = \psi(D \rchi_E)(\Omega).\] 
For the remaining term we apply Theorem~\ref{theo:ComiTorres}(iii) with $\mu = \int_{S_y \cap \, \cdot} \psi(\nu(y))\, \mathrm{d}\mathcal{H}^{n-1}$ to get 
\[ \int_{S_y \cap [(D^1 \cup \partial^\ast D) \triangle \overline{D_k}]} \psi(\nu(y))\, \mathrm{d}\mathcal{H}^{n-1} = 0. \] 
Using $D_k^0 \cap \Omega = \Omega \setminus \overline{D_k}$ and $\mathcal{H}^{n-1} \big( \Omega \setminus (D^1 \cup \partial^* D \cup D^0) \big) = 0$, we thus obtain 
\begin{align*} 
 \int_{S_y \cap D^0_k} \psi(\nu(y))\, \mathrm{d}\mathcal{H}^{n-1} 
  & =  \int_{S_y} \psi(\nu(y))\, \mathrm{d}\mathcal{H}^{n-1} -  \int_{S_y \cap \overline{D_k}} \psi(\nu(y))\, \mathrm{d}\mathcal{H}^{n-1} \\ 
  & = \int_{S_y} \psi(\nu(y))\, \mathrm{d}\mathcal{H}^{n-1} -  \int_{S_y \cap  (D^1 \cup \partial^\ast D)} \psi(\nu(y))\, \mathrm{d}\mathcal{H}^{n-1} + o(k)\\
  & = \int_{S_y \cap D^0} \psi(\nu(y))\, \mathrm{d}\mathcal{H}^{n-1} + o(k) 
\end{align*}
as $k \to \infty$. 
\smallskip

\noindent{\em Step 4.} 
For the general case $(y, D) \in GSBV^p_1(\Omega;\R^m) \times \mathcal{F}(\Omega)$ it is now sufficient to provide a sequence $(D_k )\subset \mathcal{F}(\Omega)$ such that $\mathcal{L}^n(D_k) = \mathcal{L}^n(D)$, $\into D_k \ne \emptyset$, $\into (\Omega \setminus D_k) \ne \emptyset$ and 
\[ \lim_{k \to \infty} \mathcal{E}^{\rm rel}(y, D_k) 
   = \mathcal{E}^{\rm rel}(y, D). \] 
Note that by Step 1 we may assume without loss of generality $0 < \mathcal{L}^n(D) < \mathcal{L}^n(\Omega)$. Fix $x_0 \in D^0 \cap \Omega$ and $y_0 \in D^1$ and notice that 
\begin{align*}
	\mathcal{L}^n\big(B_\rho(x_0) \setminus D\big) >0 \quad \textrm{and} \quad \mathcal{L}^n\big(B_{\rho}(y_0) \cap D\big) >0
\end{align*} 
for all $\rho > 0$. Therefore we can choose two sequences $ \sigma_k \searrow 0 $ and $ \tau_k \searrow 0 $ such that 
\begin{align*}
	\mathcal{L}^n\big(B_{\sigma_k}(x_0) \setminus D\big) = \mathcal{L}^n\big(B_{\tau_k}(y_0) \cap D\big)
\end{align*}
and define
\[ D_k 
   = (D \cup B_{\sigma_k}(x_0) ) \setminus \overline{B_{\tau_k}(y_0)}. \]
These sets evidently satisfy  $D_k \in \mathcal{F}(\Omega)$ with $\mathcal{L}^n(D_k) = \mathcal{L}^n(D)$ for large $k$. Moreover, we have
\[ \lim_{k \to \infty} \mathcal{E}^{\rm rel}(y, D_k) 
   = \mathcal{E}^{\rm rel}(y, D). \qedhere \] 
\end{proof}

\section{Membrane limits}

In this section we fix $ p > 1 $, $ \omega \subset \mathbb{R}^2 $ a bounded open set with Lipschitz boundary and set $ \Omega_k= \omega \times (0, h_k) \subset \mathbb{R}^3 $ for a sequence of `membrane heights' $h_k > 0$ with $h_k \to 0$. Also set $\Omega =\omega \times (0, 1)$. 
To simplify the notation we make the following identifications: 
\begin{align*}
  GSBV^p(\omega; \mathbb{R}^3) 
  &= \big\{ u \in GSBV^p(\Omega; \mathbb{R}^3) : \partial_3 u =0, \; \nu_3(u)  =0 \big\}, \\ 
  \mathcal{F}(\omega) 
  &= \big\{ D \in \mathcal{F}(\Omega)  : \nu_3(D) =0 \big\}.
\end{align*}
Our goal is to analyse the asymptotic behaviour of the energy
\begin{align*}
	\mathcal{G}_k: W^{1,p}(\Omega_k; \mathbb{R}^3) \times \mathcal{F}(\Omega_k) \rightarrow \mathbb{R}
\end{align*} 
given by
\begin{align*}
	\mathcal{G}_k(u,D) = \int_{\Omega_k \setminus D} W(\nabla u) \, \mathrm{d}x +  \int_{\Omega_k \cap \partial^\ast D} \psi(\nu(D))\, \mathrm{d}\mathcal{H}^{2}
\end{align*}
for $(u, D) \in W^{1,p}(\Omega_k; \mathbb{R}^3)  \times \mathcal{F}(\Omega_k) $. The function $ W : \mathbb{R}^{3 \times 3} \rightarrow \mathbb{R} $ is a continuous function and $ \psi $ is an arbitrary norm on $ \mathbb{R}^3 $. We assume that they both satisfy the growth assumptions of the previous section.  To set the problem in the fixed domain $ \Omega $ we apply the usual rescaling $ \Omega_k \ni (x', x_3) \mapsto (x',  h_k^{-1} x_3) \in \Omega $.  (We use a prime to denote the first two columns or entries of a matrix, respectively, vector and in particular write $ \nabla u =(\nabla'u, \partial_3u) $.) We then consider the rescaled functional
\begin{align*}
	\mathcal{E}_{k} : W^{1,p}(\Omega; \mathbb{R}^3) \times \mathcal{F}(\Omega) \rightarrow  \mathbb{R}
\end{align*}
given by
\begin{align*}\label{rescaling}
	\mathcal{E}_{k}(u,D) = \int_{\Omega \setminus D}W\big(\nabla' u, h_k^{-1} \partial_3 u\big)\, \mathrm{d}x  + \int_{\Omega \cap \partial^* D} \psi\big(\nu'(D), h_k^{-1} \nu_3(D)\big)\, \mathrm{d}\mathcal{H}^2 
\end{align*}
for $(u, D) \in W^{1,p}(\Omega; \mathbb{R}^3) \times \mathcal{F}(\Omega)$.

We define the `membrane functional' $ \mathcal{E}_0 : W^{1,p}(\omega; \mathbb{R}^3) \times \mathcal{F}(\omega) \rightarrow \mathbb{R} $ by
\begin{align*}
	\mathcal{E}_0(u,D)= \int_{\omega \setminus D}W_0(\nabla' u)\, \mathrm{d}x' +  \int_{\omega \cap \partial^\ast D} \psi_0(\nu'(D))\, \mathrm{d}\mathcal{H}^{1} 
\end{align*}
for all $(u,D)\in W^{1,p}(\omega; \mathbb{R}^3) \times \mathcal{F}(\omega) $, where
\begin{equation}\label{eq: W0}
	W_0(\xi') = \inf \big\{ W(\xi', \xi_3) : \xi_3 \in \mathbb{R}^3  \big\} \quad \textrm{for all $ \xi' \in \mathbb{R}^{3 \times 2} $} 
\end{equation}
and 
\begin{equation}\label{eq: psi0}
	\psi_0(v') = \inf\{ \psi(v', v_3) : v_3 \in \mathbb{R} \}\quad \textrm{for all $ v' \in \mathbb{R}^2 $.}
\end{equation}
The relaxation of $ \mathcal{E}_0 $ is given by the functional $ \mathcal{E}_0^{\rm rel} : GSBV^p_1(\omega; \mathbb{R}^3) \times \mathcal{F}(\omega)\rightarrow \mathbb{R}  $,
\begin{align*}
\mathcal{E}_0^{\rm rel}(u,D) 
= \int_{\omega \setminus D}W_0^{\rm qc}(\nabla' u)\, \mathrm{d}x 
+ 2\int_{S_u \cap D^0} \psi_0(\nu'(u))\, \mathrm{d}\mathcal{H}^{1}
+  \int_{\omega \cap \partial^\ast D} \psi_0(\nu'(D))\, \mathrm{d}\mathcal{H}^{1}. 
\end{align*}
We remark that $W_0$ and thus also $W_0^{\rm qc}$ satisfy the same growth condition \eqref{eq:p-growth} as $W$ (with $X$ replaced by $X' \in \R^{3 \times 2}$). Moreover $ \psi_0 $ is a norm on $ \mathbb{R}^2 $ with $ \bar{c}|v'| \leq \psi_0(v') \leq \bar{C}|v'| $ for each $ v' \in \mathbb{R}^2 $. Our main theorem for membranes is the following $\Gamma(L^1)$-convergence result with possible volume constraints.

\begin{theorem}\label{theo:membrane-Gamma} 
The functionals $\mathcal{E}_{k}$ $\Gamma(L^1)$-converge to $\mathcal{E}_0^{\rm rel}$, in fact:  
\begin{itemize} 
\item[(i)] whenever $(u_k) \subset W^{1,p}(\Omega; \R^{3})$ and $(D_k) \subset \mathcal{F}(\Omega)$ are such that $u_k \to u$ in $L^1(\Omega; \R^{3})$ and $\rchi_{D_k} \to \rchi_{D}$ in $L^1(\Omega)$ for some $u \in GSBV^p_1(\omega; \R^{m})$ and $D \in \mathcal{F}(\omega)$, then one has 
\[ \liminf_{k \to \infty} \mathcal{E}_k(u_k, D_k) 
   \ge \mathcal{E}_0^{\rm rel}(u, D), \] 
\item[(ii)] for each $(u, D) \in  GSBV^p_1(\omega; \R^{3}) \times \mathcal{F}(\omega)$ and $c_1, c_2, \ldots \in (0,\mathcal{L}^2(\omega)]$ with $c_k \to \mathcal{L}^2(D)$ there are $(u_k) \subset C^{\infty}(\overline{\Omega}; \R^3)$ and $(D_k) \subset \mathcal{C}(\Omega)$ with $u_k \to u$ in $L^1(\Omega; \R^{3})$, $\rchi_{D_k} \to \rchi_{D \times (0,1)}$ in $L^1(\Omega)$ and $\mathcal{L}^3(D_k) = c_k$ for all $k \in \N$  and
\[ \lim_{k \to \infty} \mathcal{E}_k(u_k, D_k) 
= \mathcal{E}_{0}^{\rm rel}(u, D). \]
\end{itemize}
\end{theorem}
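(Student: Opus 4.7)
For part (i), the definitions of $W_0$ and $\psi_0$ together with the $1$-homogeneity of $\psi$ yield the elementary pointwise bounds
\[
  W(\nabla' u_k, h_k^{-1}\partial_3 u_k)\ge W_0(\nabla' u_k),\qquad \psi(\nu'(D_k), h_k^{-1}\nu_3(D_k))\ge \psi_0(\nu'(D_k)).
\]
Fubini for the bulk term and the slicing formula for sets of finite perimeter (whose tangential Jacobian for $(x',x_3)\mapsto x_3$ on $\partial^\ast D_k$ equals $|\nu'(D_k)|$, combined with the $1$-homogeneity of $\psi_0$) convert these into the slicewise lower bound
\[
  \mathcal{E}_k(u_k, D_k)\ge \int_0^1 \mathcal{E}_0(u_k^{x_3}, D_k^{x_3})\,\mathrm{d}x_3,
\]
where $u_k^{x_3}(x')=u_k(x',x_3)$, $D_k^{x_3}=\{x'\in\omega:(x',x_3)\in D_k\}$, and $\mathcal{E}_0$ is interpreted as the two-dimensional functional with density pair $(W_0,\psi_0)$. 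Fubini also supplies a subsequence along which $u_k^{x_3}\to u$ in $L^1(\omega;\mathbb{R}^3)$ and $\rchi_{D_k^{x_3}}\to \rchi_D$ in $L^1(\omega)$ for $\mathcal{L}^1$-a.e.\ $x_3$; applying Theorem~\ref{theo:bulk-relaxation}(i) slicewise and invoking Fatou's lemma then yields the claim.

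For part (ii) I proceed in two stages followed by a diagonal argument. \emph{Stage~1 (two-dimensional reduction).} Apply Theorem~\ref{theo:bulk-relaxation}(ii) with $n=2$, density pair $(W_0,\psi_0)$, and prescribed volume sequence $c_k$ to obtain $(\tilde u_k,\tilde D_k)\in C^\infty(\bar\omega;\mathbb{R}^3)\times\mathcal{C}(\omega)$ with $\mathcal{L}^2(\tilde D_k)=c_k$, $\tilde u_k\to u$ and $\rchi_{\tilde D_k}\to\rchi_D$ in $L^1$, and $\mathcal{E}_0(\tilde u_k,\tilde D_k)\to \mathcal{E}_0^{\rm rel}(u,D)$. \emph{Stage~2 (three-dimensional lift of smooth data).} For each fixed smooth pair $(\tilde u,\tilde D)$, construct $(v_j,E_j)\in C^\infty(\bar\Omega;\mathbb{R}^3)\times\mathcal{C}(\Omega)$ with $\mathcal{L}^3(E_j)=\mathcal{L}^2(\tilde D)$, $v_j\to\tilde u$ and $\rchi_{E_j}\to\rchi_{\tilde D\times(0,1)}$ in $L^1$, and $\mathcal{E}_j(v_j,E_j)\to \mathcal{E}_0(\tilde u,\tilde D)$; a diagonal extraction from this double-indexed family then delivers the required recovery sequence.

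The bulk part of Stage~2 uses a Le~Dret--Raoult-type ansatz $v_j(x',x_3)=\tilde u(x')+h_j x_3\,\xi_3^\star(x')$, with $\xi_3^\star\in C^\infty(\bar\omega;\mathbb{R}^3)$ a smooth approximation of a measurable selection of $\varepsilon$-minimizers of $W(\nabla'\tilde u(x'),\,\cdot\,)$, whose existence and $p$-growth bound follow from the continuity and coercivity of $W$. Then $\nabla' v_j\to \nabla'\tilde u$ uniformly and $h_j^{-1}\partial_3 v_j=\xi_3^\star$, so continuity of $W$, the $p$-growth, and dominated convergence give $\int_{\Omega} W(\nabla' v_j, h_j^{-1}\partial_3 v_j)\,\mathrm{d}x\to \int_{\omega} W_0(\nabla'\tilde u)\,\mathrm{d}x'$. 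For the surface part I would construct $E_j$ as a smooth tilted deformation of the cylinder $\tilde D\times(0,1)$: parameterize $\partial\tilde D$ by arclength, choose a continuous selection $\alpha(\nu')$ of a minimizer of $\psi(\nu',\,\cdot\,)$, and within a tubular neighborhood of $\partial\tilde D\times(0,1)$ deform the lateral boundary so that its outer unit normal becomes proportional to $(\nu'(\tilde D),h_j\alpha(\nu'(\tilde D)))$. On this tilted surface $h_j^{-1}\nu_3(E_j)\to\alpha(\nu'(\tilde D))$, so by $1$-homogeneity and continuity of $\psi$ the integrand converges to $\psi(\nu'(\tilde D),\alpha(\nu'(\tilde D)))=\psi_0(\nu'(\tilde D))$, while its $\mathcal{H}^2$-measure tends to $\mathcal{H}^1(\partial\tilde D)$; the top and bottom discs of $E_j$ lie in $\partial\Omega$ and contribute nothing. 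The $O(h_j)$ volume discrepancy induced by the tilt is absorbed by adjoining or removing a small ball interior to $\tilde D\times(0,1)$, exactly as in the proof of Theorem~\ref{theo:bulk-relaxation}(ii).

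The main obstacle will be this Stage~2 surface construction in the general (non-cylinder-recoverable) case: building a smooth, volume-preserving tilted approximation of $\tilde D\times(0,1)$ whose lateral outer unit normal varies continuously along $\partial\tilde D$ in the prescribed $(\nu'(\tilde D),h_j\alpha(\nu'(\tilde D)))$ direction while keeping $E_j\in\mathcal{C}(\Omega)$. A subordinate technical point is the continuous (indeed $C^\infty$) approximation of the selections $\xi_3^\star$ and $\alpha$, which is handled via $\varepsilon$-minimization and mollification.
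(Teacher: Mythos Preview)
Your proof of (i) via slicing is correct but takes a genuinely different route from the paper. The paper avoids slicing altogether: it introduces auxiliary three-dimensional functionals $\mathcal{E}_0^\eta$ with densities $W_0^\eta(X)=W_0(X')+\eta|X_3|^p$ and $\psi_0^\eta(v)=\psi_0(v')+\eta|v_3|$, notes that $\mathcal{E}_k\ge\mathcal{E}_0^\eta$ minus an $\eta$-controlled remainder, applies Theorem~\ref{theo:bulk-relaxation}(i) directly on $\Omega$ (using $(W_0^\eta)^{\rm qc}(X)\ge W_0^{\rm qc}(X')$ and $\psi_0^\eta(v',0)=\psi_0(v')$), and then shows the remainder vanishes because the energy bound forces $\int_{\Omega\setminus D_k}|\partial_3 u_k|^p\to 0$ and $\int_{\Omega\cap\partial^\ast D_k}|\nu_3(D_k)|\,\mathrm{d}\mathcal{H}^2\to 0$. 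Your approach is equally valid and arguably more transparent---it reuses Theorem~\ref{theo:bulk-relaxation}(i) only in its natural two-dimensional form---at the price of Vol'pert's slicing theorem for the perimeter term.

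For (ii) your scheme matches the paper's (two-dimensional relaxation via Theorem~\ref{theo:bulk-relaxation}(ii), then a Le~Dret--Raoult bulk ansatz together with a ruled-surface tilt of the lateral boundary), but the way you distribute the volume constraint across the two stages needs a small fix. Imposing $\mathcal{L}^2(\tilde D_k)=c_k$ in Stage~1 and then $\mathcal{L}^3(E_j)=\mathcal{L}^2(\tilde D_k)$ in Stage~2 yields, after diagonalisation, a sequence with volumes $c_{m(k)}$ rather than $c_k$. The paper reverses the order: it keeps $\mathcal{L}^2(\tilde D_m)=\mathcal{L}^2(D)$ at the 2d stage and feeds the full prescribed sequence $(c_k)$ into the 3d lift, so that for each fixed smooth pair one constructs $(v_k,E_k)$ with $\mathcal{L}^3(E_k)=c_k$ and $\limsup_k\mathcal{E}_k(v_k,E_k)\le\mathcal{E}_0(\tilde u_m,\tilde D_m)+\varepsilon$ before diagonalising over $m$. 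With that reordering your argument goes through; the measurable-selection-then-mollification handling of $\xi_3^\star$ and $\alpha$ that you flag is exactly what the paper does (accepting an $\varepsilon$ error per component of $\partial\tilde D$).
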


As for the bulk model, firstly we notice the following compactness property, which essentially follows from Proposition~\ref{prop:bulk-compactness}. 
\begin{proposition}\label{Membranes compactness}
	If $ q > 1 $ and $(u_k, D_k) \subset W^{1,p}(\Omega; \mathbb{R}^3) \times \mathcal{F}(\Omega) $ is a sequence with 
	\begin{align*}
		\mathcal{E}_k(u_k, D_k) + \| u_k \|_{L^q(\Omega;\R^3)} \leq C,
	\end{align*}
	then there exists $(u,D)\in GSBV^p_q(\omega; \mathbb{R}^3) \times \mathcal{F}(\omega)$ such that $ u = \rchi_{\Omega \setminus D} u$ and , for a subsequence,
	\begin{align*}
		\rchi_{\Omega \setminus D_k} u_k \to  \rchi_{\Omega \setminus D} u \quad \textrm{in $L^1(\Omega; \mathbb{R}^3) $} \qquad \textrm{and} \qquad  \rchi_{D_k} \to\rchi_{D} \quad \textrm{in $L^1(\Omega) $}.
	\end{align*}
\end{proposition}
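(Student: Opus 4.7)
The plan is to reduce to the bulk compactness Proposition~\ref{prop:bulk-compactness} and then exploit the singular $h_k^{-1}$-factors in $\mathcal{E}_k$ to force the limit to be independent of~$x_3$, both in its approximate gradient and in its jump normal. Using $\bar c|\cdot|^p-\bar C\le W\le\bar C(1+|\cdot|^p)$ and $\bar c|\cdot|\le\psi\le\bar C|\cdot|$ on the bound $\mathcal{E}_k(u_k,D_k)+\|u_k\|_{L^q(\Omega;\R^3)}\le C$, one extracts the \emph{a priori} estimates
\begin{align*}
 \|\nabla u_k\|_{L^p(\Omega\setminus D_k;\R^{3\times 3})} &\le C, \qquad \|\partial_3 u_k\|_{L^p(\Omega\setminus D_k;\R^{3})} \le C h_k, \\
 \mathcal{H}^2(\Omega\cap\partial^\ast D_k) &\le C, \qquad \int_{\Omega\cap\partial^\ast D_k}|\nu_3(D_k)|\,\mathrm{d}\mathcal{H}^2 \le C h_k.
\end{align*}
The first line together with the upper growth of $W$ and $\psi$ yields $\mathcal{E}(u_k,D_k)\le C$ for the bulk functional of Section~3, so Proposition~\ref{prop:bulk-compactness} provides a (non-relabelled) subsequence and a pair $(u,D)\in GSBV^p_q(\Omega;\R^3)\times\mathcal{F}(\Omega)$ with $u=\rchi_{\Omega\setminus D}u$, $\rchi_{\Omega\setminus D_k}u_k\to\rchi_{\Omega\setminus D}u$ in $L^1(\Omega;\R^3)$ and $\rchi_{D_k}\to\rchi_D$ in $L^1(\Omega)$.

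It then remains to upgrade $(u,D)$ to the subspaces $GSBV^p_q(\omega;\R^3)\times\mathcal{F}(\omega)$ from the identifications of Section~4, i.e.\ to establish $\nu_3(D)=0$, $\partial_3 u=0$ and $\nu_3(u)=0$. For the set, the second line of the estimate above gives $|\partial_3\rchi_{D_k}|(\Omega)\le C h_k\to 0$, and the lower semicontinuity of the total variation together with $\rchi_{D_k}\to\rchi_D$ in $L^1(\Omega)$ forces $|\partial_3\rchi_D|(\Omega)=0$, hence $\nu_3(D)=0$ $\mathcal{H}^2$-a.e.\ on $\partial^\ast D$. For the deformation I would apply the analogous argument after truncation: setting $v_k=\rchi_{\Omega\setminus D_k}u_k\in SBV^p(\Omega;\R^3)$ and denoting by $v_{k,i}^M$ the scalar truncation at level $M>0$ of its $i$-th component, one has $S_{v_{k,i}^M}\subset\partial^\ast D_k$, so the BV decomposition yields
\begin{align*}
 |\partial_3 v_{k,i}^M|(\Omega)
 &\le \|\partial_3 u_k\|_{L^1(\Omega\setminus D_k)}+2M\int_{\partial^\ast D_k}|\nu_3(D_k)|\,\mathrm{d}\mathcal{H}^2 \\
 &\le C(1+M)h_k\to 0.
\end{align*}
Since $v_{k,i}^M\to v_i^M$ in $L^1(\Omega)$, lower semicontinuity gives $|\partial_3 v_i^M|(\Omega)=0$, so both the absolutely continuous and the jump part of $D_3 v_i^M$ vanish; as $(v_i^M)^+\neq(v_i^M)^-$ on $S_{v_i^M}$, this forces $\partial_3 v_i^M=0$ $\mathcal{L}^3$-a.e.\ and $\nu_3(v_i^M)=0$ $\mathcal{H}^2$-a.e.\ on $S_{v_i^M}$. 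Letting $M\to\infty$ and using $S_{u_i}=\bigcup_M S_{u_i^M}$ together with $\nabla u_i=\nabla u_i^M$ on $\{|u_i|<M\}$ (recalled in Section~\ref{section:BV}) yields the required $\partial_3 u=0$ and $\nu_3(u)=0$ for each component, hence for $u$.

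I expect the main obstacle to be the passage from the scaled surface control $\int_{\partial^\ast D_k}|\nu_3(D_k)|\,\mathrm{d}\mathcal{H}^2=O(h_k)$ to the vanishing of $|\partial_3|$ applied to the possibly unbounded $GSBV$-limit~$u$. This is precisely why the truncation step is essential: the jump contribution to $|\partial_3 v_{k,i}^M|$ carries the amplitude factor $2M$, so only after restricting to $SBV^p_\infty$ does the $h_k$-decay survive, and the $GSBV$ conclusion has to be recovered by the monotone exhaustion $S_u=\bigcup_M S_{u^M}$. Once these two observations are in place, the assertion $u=\rchi_{\Omega\setminus D}u$ and both $L^1$-convergences are inherited directly from Proposition~\ref{prop:bulk-compactness}.
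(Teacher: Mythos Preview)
Your argument is correct and follows the paper's overall strategy of first invoking Proposition~\ref{prop:bulk-compactness} and then upgrading $(u,D)$ to the $x_3$-independent subspaces. The upgrading step, however, is carried out differently. The paper applies the lower semicontinuity Theorems~\ref{theo:Kristensen} and~\ref{theo:BV-ellipticity} to the anisotropic integrands $|(\xi',M\xi_3)|^p$ and $|(\nu',M\nu_3)|$ (with $M$ eventually below $h_k^{-1}$) to obtain $M$-uniform bounds on $\int|\partial_3 u|^p$, $\int_{S_{u^M}}|\nu_3(u^M)|$ and $\int_{\partial^\ast D}|\nu_3(D)|$, and then sends $M\to\infty$. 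You instead isolate the single distributional derivative $\partial_3$ and use only the elementary lower semicontinuity of the total variation $f\mapsto|\partial_3 f|(\Omega)$ under $L^1$-convergence, applied to $\rchi_{D_k}$ and to the scalar truncations $v_{k,i}^M$; the quantitative decay $|\partial_3 v_{k,i}^M|(\Omega)=O((1+M)h_k)$ then kills both the absolutely continuous and the jump part of $D_3 u_i^M$ simultaneously. Your route is more self-contained in that it avoids the heavier semicontinuity machinery; the paper's route is uniform with the tools already set up in Section~\ref{section:BV} and treats the gradient and jump terms separately via the same theorems used in the $\liminf$ proofs.
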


\begin{remark}
Both in Theorem~\ref{theo:membrane-Gamma} and in Proposition~\ref{Membranes compactness} suitable loading terms can be included, e.g., given by some $\ell \in L^{p'}(\Omega; \R^3)$, $p' = p/(p-1)$, acting on the elastic part of the body. Setting 
\begin{align*} 
  L : W^{1,p}(\Omega; \mathbb{R}^3) \times \mathcal{F}(\Omega) \to \R, \quad 
  L(u, D) 
   = \int_{\Omega \setminus D} \ell \cdot u \, \mathrm{d}x, \\ 
  \bar{L} : GSBV^p_1(\omega; \mathbb{R}^3) \times \mathcal{F}(\omega) \to \R, \quad 
   \bar{L}(u, D) 
   = \int_{\omega \setminus D} \ell \cdot u \, \mathrm{d}x', 
\end{align*}
where $\bar{\ell}(x') = \int_0^1 \ell(x', x_3) \, \mathrm{d} x_3$, a standard argument shows that for a sequence of almost minimizers $(u_k, D_k)$ of $\mathcal{E}_k + L$ with uniformly bounded $\| u_k \|_{L^q(\Omega;\R^3)}$ one has $\rchi_{\Omega \setminus D_k} u_k \to u$ in $L^1(\Omega; \mathbb{R}^3) $ and $\rchi_{D_k} \to\rchi_{D}$ in $L^1(\Omega) $ for a minimizer $(u, D)$ of $\mathcal{E}_0^{\rm rel} + \bar{L}$. 
\end{remark}

\begin{proof}[Proof of Proposition~\ref{Membranes compactness}] 
Since $W(X', h_k^{-1}X_3) \ge \bar{c} |(X', h_k^{-1}X_3)|^p - \bar{C} \ge \bar{c} |X|^p - \bar{C}$ and $\psi(\nu', h_k^{-1}\nu_3) \geq \bar{c} |(\nu', h_k^{-1}\nu_3)| \geq \bar{c} $, proceeding as in the proof of Proposition~\ref{prop:bulk-compactness} we immediately obtain a subsequence (not relabeled) such that $ \rchi_{D_k} \to \rchi_{D} $ in $ L^1(\Omega) $ and $\rchi_{\Omega \setminus D_k} u_k \to u $  in $ L^1(\Omega; \mathbb{R}^3) $ for some $ u \in GSBV^p_1(\Omega; \mathbb{R}^3) $ and $ D \in \mathcal{F}(\Omega) $ with $ u = \rchi_{\Omega \setminus D} u $. 

It remains to show that $ \nu_3(D)=0 $, $ \partial_3 u =0 $ and $ \nu_3(u)=0 $. Applying Theorem \ref{theo:BV-ellipticity} we infer that for every $M > 0$ 
\begin{align*}
  M \bar{c} \int_{\Omega \cap \partial^* D}|\nu_3(D)| \, \mathrm{d}\mathcal{H}^2 & \leq c \int_{\Omega \cap \partial^* D}|(\nu'(D), M \nu_3(D))| \, \mathrm{d}\mathcal{H}^2 \\
  & \leq \bar{c} \liminf_{k \to \infty}  \int_{\Omega \cap \partial^* D_k}|(\nu'(D_k), M \nu_3(D_k))| \, \mathrm{d}\mathcal{H}^2\\
  & \leq \bar{c} \liminf_{k \to \infty}  \int_{\Omega \cap \partial^* D_k}|(\nu'(D_k), h_k^{-1} \nu_3(D_k))| \, \mathrm{d}\mathcal{H}^2\\
  & \leq \liminf_{k \to \infty} \mathcal{E}_k(u_k, D_k) 
  \leq C
\end{align*}
and, setting $v_k = \rchi_{\Omega \setminus D_k} u_k$ and denoting by $v_k^M$ its componentwise truncation, likewise 
\begin{align*}
  M \bar{c} \int_{S_{u^M}} |\nu_3(u^M)| \, \mathrm{d}\mathcal{H}^2 
  & \leq \bar{c} \liminf_{k \to \infty}  \int_{S_{v_k^M}} |(\nu'(v_k^M), M \nu_3(v_k^M))| \, \mathrm{d}\mathcal{H}^2 \\ 
  & \leq \bar{c} \liminf_{k \to \infty}\int_{\Omega \cap \partial^* D_k}|(\nu'(D_k), M \nu_3(D_k))| \, \mathrm{d}\mathcal{H}^2 
  \le C. 
\end{align*}
Similarly, Theorem~\ref{theo:Kristensen} gives 
\begin{align*}
  M^p \bar{c} \int_{\Omega}|\partial_3 u|^p\,\mathrm{d}x 
  & \leq \bar{c} \liminf_{k \to \infty} \int_{\Omega}|(\nabla' v_k, M \partial_3 v_k)|^p\,\mathrm{d}x \\
  & \leq \liminf_{k \to \infty} \mathcal{E}_k(u_k, D_k) + \bar{C} \mathcal{L}^n(\Omega) 
   \le C. 
\end{align*}
The desired conclusion follows by sending $M$ to $\infty$.  
\end{proof}

We now prove the lower bound in Theorem~\ref{theo:membrane-Gamma}.  

\begin{proof}[Proof of Theorem~\ref{theo:membrane-Gamma}(i)] 
 We may assume $\sup_{k \in \mathbb{N}} \mathcal{E}_k(u_k, D_k) < \infty$. For $\eta \ge 0$ we define the auxiliary bulk functionals $ \mathcal{E}_0^\eta : W^{1,p}(\Omega; \mathbb{R}^3) \times \mathcal{F}(\Omega) \to \mathbb{R} $ by
\begin{align*}
  \mathcal{E}_0^\eta(u,D) 
  = \int_{\Omega \setminus D}W_0^\eta(\nabla u) \, \mathrm{d}x + \int_{\Omega \cap \partial^\ast D} \psi_0^\eta(\nu(D))\, \mathrm{d}\mathcal{H}^{2}, 
\end{align*}
where $W_0^\eta(X) = W_0(X')+ \eta|X_{3}|^p$ and $ \psi_0^\eta(v) = \psi_0(v') + \eta |v_3| $, so that 
\begin{flalign}\label{eq:lb-est-mem}
  \liminf_{k \to \infty} \mathcal{E}_k(u_k, D_k) & \ge \liminf_{k \to \infty} \mathcal{E}_0^\eta (u_k, D_k)\\
  & \qquad  - \eta \limsup_{k \to \infty}\bigg( \int_{\Omega \setminus D_k} |\partial_3 u_k|^p \, \mathrm{d} x + \int_{\Omega \cap \partial^\ast D_k}|\nu_3(D_k)|\, \mathrm{d}\mathcal{H}^2\bigg) \notag. 
\end{flalign}
Noting that $(W^\eta_0)^{\rm qc}(X) \ge (W^0_0)^{\rm qc}(X) = W_0^{\rm qc}(X')$ for every $X \in \R^{3 \times 3}$ we infer from Theorem~\ref{theo:bulk-relaxation}(i) that for every $ \eta > 0 $
\begin{align*}
  \liminf_{k \to \infty} \mathcal{E}_0^\eta (u_k, D_k) \ge \big( \mathcal{E}_0^\eta\big)^{\rm rel}(u,D) 
  \ge \mathcal{E}_0^{\rm rel}(u, D). 
\end{align*}
Since $\bar{c} |(\nabla' u_k, h_k^{-1} \partial_3 u_k)|^p - \bar{C} \le W(\nabla' u_k, h_k^{-1} \partial_3 u_k)$ it follows 
\begin{align*}
  \bar{c} \limsup_{k \to \infty} \int_{\Omega\setminus D_k}|\partial_3 u_k|^p\,\mathrm{d}x 
  & \leq \limsup_{k \to \infty} h_k^{p} \big( \mathcal{E}_k(u_k, D_k) + \bar{C} \mathcal{L}^n(\Omega) \big) 
  = 0. 
\end{align*}
Analogously
\begin{align*}
	\bar{c} \limsup_{k \to \infty} \int_{\Omega\cap \partial^\ast D_k}|\nu_3(D_k)|\,\mathrm{d}\mathcal{H}^2
	& \leq \limsup_{k \to \infty} h_k  \mathcal{E}_k(u_k, D_k) 
	= 0,
\end{align*} 
and the assertion follows from \eqref{eq:lb-est-mem}. 
\end{proof}

\begin{proof}[Proof of Theorem~\ref{theo:membrane-Gamma}(ii)] 
 
We proceed in two steps: Step 1 provides a key auxiliary statement that can be used together with Theorem \ref{theo:bulk-relaxation}(ii) and a standard diagonal sequence argument to construct the recovery sequence.
\smallskip 

\noindent{\em Step 1.} 
Suppose $ (u, D) \in C^\infty(\overline{\omega}; \mathbb{R}^3) \times \mathcal{C}(\omega) $, $c_1, c_2, \ldots \in (0,\mathcal{L}^2(\omega)]$ with $c_k \to \mathcal{L}^2(D)$  and $ \eps > 0 $. In particular, there exists a bounded open set $ A \subseteq \mathbb{R}^2 $ with smooth boundary such that $ D = A \cap \omega $ and $ \mathcal{H}^1(\partial A \cap \partial \omega) =0 $.  We prove that there exists a sequence $(u_k, D_k) \in \C^\infty( \overline{\Omega}; \mathbb{R}^3) \times \mathcal{C}(\Omega)$ such that $ \mathcal{L}^3 (D_k) = c_k $ for each $ k \geq 1 $, $ u_k \to u $ in $ L^1(\Omega; \mathbb{R}^3) $, $ \rchi_{D_k} \to \rchi_{D \times (0,1)} $ in $ L^1(\Omega) $ and
\begin{equation*}
	\limsup_{k \to \infty}\mathcal{E}_k(u_k, D_k) \leq \mathcal{E}_0(u, D) + \eps.
\end{equation*}

Let $ \Gamma_1, \ldots , \Gamma_N $ be the connected components of $ \partial A $ and notice that they are simple smooth curves in $ \mathbb{R}^2 $. For each $ i \in \{1, \ldots , N\} $ we choose a parametrization $ \gamma_i : [0,l] \rightarrow \mathbb{R}^2 $ of $ \Gamma_i $ by arclength and we define $ \eta_i : [0,l_i] \rightarrow \mathbb{R}^2 $ to be the exterior unit-normal of $ A $ along $ \Gamma_i $.  A standard measurable selection criterion \cite[Theorem~1.2 in Chap.~VIII]{EkelandTemam:76} in combination with the lower bound $\psi_0 \geq \bar{c}|\cdot|$  allows us to choose a
 function $ v_i \in L^\infty([0,l_i]) $ such that 
\begin{equation*}
\psi_0(\eta_i(t)) = \psi(\eta_i(t),  v_i(t)) \qquad \textrm{for every $ t \in [0,l_i] $}.
\end{equation*}
Approximating the function $ v_i $ by smooth functions and employing the dominated convergence theorem, we can find
a smooth function $ \phi_i : [0,l_i]\rightarrow \mathbb{R} $ so that the derivatives of $ \phi_i $ satisfy $ \phi_i^{(m)}(0) = \phi_i^{(m)}(l_i) $ for every $ m \geq 0 $ and
\begin{equation*}
\int_{I} \psi(\eta_i(t), \phi_i(t))\, \mathrm{d}t \leq \int_{I}\psi_0(\eta_i(t))\, \mathrm{d}t + \frac{\eps}{2N}
\end{equation*}
for every measurable set $ I \subseteq [0,l_i] $. We consider the ruled surfaces $ \varphi_{i,k} : [0,l_i] \times (0,1) \rightarrow \mathbb{R}^3 $ and $ \varphi_i :[0,l_i] \times (0,1) \rightarrow \mathbb{R}^3 $ of the form
\begin{equation*}
\varphi_{i,k}(t,s) = (\gamma_i(t) - s h_k\phi_i(t)\eta_i(t), s) \quad \textrm{and} \quad \varphi_i(t,s) = (\gamma_i(t), s)
\end{equation*}
and we denote with $ S_{i,k} $ and $ S_i $ their images. One immediately checks that  $ \varphi_{i,k} \to \varphi_i $, $	\partial_t\varphi_{i,k} \to (\gamma_i',0) $ and $\partial_s\varphi_{i,k} \to (0,1) $ uniformly in $ [0,l_i] \times (0,1) $ as $ k \to \infty $; moreover $ D \varphi_{i,k}(t,s) $ is injective for every $ (t,s) \in [0,l_i] \times (0,1) $ and for every $ k \geq 1 $ so that $ h_k \leq \frac{1}{2 \| (\phi_i \eta_i)'\|_\infty} $. Since $ \gamma_i $ is parametrized by arclength, we notice that 
\begin{equation*}
c_i: =	\sup \Big\{ \frac{|\phi_i(t_2)\eta_i(t_2) - \phi_i(t_1)\eta_i(t_1)|}{|\gamma_i(t_2)- \gamma_i(t_1)|} : 0 \leq t_1 < t_2 < l_i \Big\} < \infty
\end{equation*} 
and we conclude that $ \varphi_{i,k}|[0, l_i) \times (0,1) $ is injective for every $ k \geq 1 $ so that $ h_k^{-1} > c_i $. 
Consequently there exists $ k_i \geq 1 $ such that for every $ k \geq k_i $ the set $ S_{i,k} $ is an embedded smooth hypersurface in $ \mathbb{R}^3 $ and the vector
\begin{equation*}
	(\eta_i(t), h_k\phi_i(t)) + \frac{sh_k \phi_i'(t)}{1 - sh_k\phi_i(t) (\eta_i'(t)  \cdot \gamma'_i(t))}(\gamma_i'(t), 0)
\end{equation*}
is normal to $ S_{i,k} $  at $ \varphi_{i,k}(t,s) $. We set $ k_0 = \sup\{k_i: i = 1, \ldots , N\} $ and 
\begin{equation*}
	\lambda_{i,k}(t,s)  = \frac{sh_k \phi_i'(t)}{1 - sh_k\phi_i(t)(\eta_i'(t)  \cdot \gamma'_i(t))}.
\end{equation*}
We can now easily find  a sequence $ \{A_k : k \geq k_0\} $ of open subsets of $ \mathbb{R}^2 \times (0,1) $ such that $ \partial A_k \cap (\mathbb{R}^2 \times (0,1)) = \bigcup_{i=1}^N S_{i,k} $ and $ \rchi_{A_k} \to \rchi_{A \times (0,1)} $ in $ L^1(\mathbb{R}^2 \times (0,1)) $. Noting that
\begin{equation*}
\nu(A_k)(\varphi_{i,k}(t,s)) = \frac{(\eta_i(t), h_k\phi_i(t)) + \lambda_{i,k}(t,s)(\gamma_i'(t),0)}{\sqrt{1 + \lambda_{i,k}(t,s)^2 + h_k^2 \phi_i(t)^2}}
\end{equation*} 
for every $(t,s)\in [0,l_i]\times (0,1) $ and
\begin{flalign*}
&	\int_{\Omega \cap \partial A_k} \psi(\nu'(A_k), h_k^{-1}\nu_3(A_k)) \, \mathrm{d} \mathcal{H}^2 \\
	&~~ =  \sum_{i=1}^N \int_{\varphi_{i,k}^{-1}(\Omega \cap S_{i,k})} \frac{| \partial_t\varphi_{i,k}(t,s) \wedge \partial_s\varphi_{i,k}(t,s)|}{\sqrt{1 + \lambda_{i,k}(t,s)^2 + h_k^2 \phi_i(t)^2}}\psi\big[(\eta_i(t)+ \lambda_{i,k}(t,s)\gamma_i'(t), \phi_i(t))\big] \, \mathrm{d} t \, \mathrm{d} s,
\end{flalign*}
we deduce from the dominated convergence theorem that 
\begin{flalign*}
	& \lim_{k \to \infty}\int_{\Omega \cap \partial A_k} \psi(\nu'(A_k), h_k^{-1}\nu_3(A_k)) \, \mathrm{d} \mathcal{H}^2 \\
	&\quad = \sum_{i=1}^N \int_{\varphi_{i}^{-1}((\Gamma_i \times (0,1)) \cap \Omega)}\psi(\eta_i(t), \phi_i(t)) \, \mathrm{d} t \, \mathrm{d} s \\
	&\quad \leq \sum_{i=1}^N \int_{\varphi_{i}^{-1}(S_i \cap \Omega)} \psi_0(\eta_i(t)) \, \mathrm{d} t \, \mathrm{d} s + \frac{\eps}{2} \\
	&\quad = \int_{\omega \cap \partial A} \psi_0(\nu'(A)) \, \mathrm{d} \mathcal{H}^1 + \frac{\eps}{2}, 
\end{flalign*}
where we have used $\mathcal{H}^1(\partial A \cap \partial \omega) = 0$ in the first equality.

We choose $ x'_0 \in \omega \setminus \overline{D} $ and $ y'_0 \in D $ and we select two nonnegative sequences $ \sigma_k \searrow 0 $ and $ \tau_k \searrow 0 $ and an integer  $ k_0' \geq k_0 $ such that
\begin{equation*}
B_k : = \overline{B_{\sigma_k}(x'_0)} \times (0,1) \subset \Omega \setminus \overline{A_k}, \qquad C_k : =\overline{B_{\tau_k}(y'_0)} \times (0,1) \subset A_k
\end{equation*}
and
\begin{equation*}
	c_k - \mathcal{L}^3(A_k \cap \Omega) = \alpha_2 \sigma_k^2 - \alpha_2 \tau_k^2 = \mathcal{L}^3(B_k) - \mathcal{L}^3(C_k)
\end{equation*}
for all $ k \geq k_0' $. Notice that if $ D = \emptyset $ then $ A_k = \emptyset $ for every $ k \geq 1 $ and we only select $ x'_0 $ and $ \sigma_k $; if $ D = \omega $ then $ A_k \cap \Omega = \omega \times (0,1) $ for every $ k \geq 1 $ and we only select $ y'_0 $ and $ \tau_k $. For each $ k \geq k_0' $ we define
\begin{equation*}
	D_k = \big((A_k \cap \Omega) \cup B_k\big) \setminus C_k \in \mathcal{C}(\Omega)
\end{equation*}
and notice that $ \mathcal{L}^3(D_k) = c_k $, $ \rchi_{D_k} \to \rchi_{D \times (0,1)} $ in $ L^1(\Omega) $ and 
\begin{equation}\label{eq:mem-upper-surf}
\lim_{k \to \infty} \int_{\Omega \cap \partial D_k} \psi(\nu'(D_k), h_k^{-1}\nu_3(D_k)) \, \mathrm{d} \mathcal{H}^2 
\leq \int_{\omega \cap \partial A} \psi_0(\nu'(A)) \, \mathrm{d}\mathcal{H}^1 + \frac{\eps}{2}.
\end{equation}
Following \cite{LeDretRaoult:95} we observe that the measurable selection criterion \cite[Theorem~1.2 in Chap.~VIII]{EkelandTemam:76} in combination with the lower bound in \eqref{eq:p-growth} and the density of $C_c^\infty(\omega \setminus \overline{D}; \mathbb{R}^3)$ in $L^p(\omega \setminus \overline{D}; \mathbb{R}^3) $ allows us to choose $ w \in C_c^\infty(\omega \setminus \overline{D}; \mathbb{R}^3) $ such that 
\begin{equation}\label{eq:mem-upper-bulk}
	\int_{\omega \setminus D} W(\nabla' u, e_3 + w) \, \mathrm{d}x' 
   \leq \int_{\omega \setminus D} W_0(\nabla' u)\, \mathrm{d}x' + \frac{\eps}{2}. 
\end{equation}
We define the functions $ u_k \in C^\infty(\overline{\Omega}; \mathbb{R}^3) $ by
\begin{equation*}
	u_k(x', x_3) = u(x') + x_3 h_k(e_3 + w(x'))
\end{equation*}
for all $(x', x_3) \in \omega \times \mathbb{R} $ and for all  $ k \geq k_0' $ and notice that $ u_k \to u $ in $ L^1(\Omega; \mathbb{R}^3) $. Since $ W $ is continuous, with the help of the dominated convergence theorem we infer 
\begin{equation*}
	\lim_{k \to \infty} \int_{\Omega \setminus D_k}W\big(\nabla'u_k(x'), h_k^{-1} \partial_3 u_k(x')\big) \mathrm{d}x 
    = \int_{\omega \setminus D}W\big(\nabla'u(x'), e_3 + w(x')\big)\, \mathrm{d}x'.  
\end{equation*}
Combining with \eqref{eq:mem-upper-surf} and \eqref{eq:mem-upper-bulk} we get 
\begin{equation*}
    \limsup_{k \to \infty}\mathcal{E}_k\big(u_k, D_k \big) 
    \leq \mathcal{E}_0(u, D) + \eps, 
\end{equation*}
which concludes Step 1.
\smallskip 

\noindent{\em Step 2.} Here we prove Theorem \ref{theo:membrane-Gamma}(ii). Let $(u, D) \in  GSBV^p_1(\omega; \R^{3}) \times \mathcal{F}(\omega)$ and $c_1, c_2, \ldots \in (0,\infty)$ with $c_k \to \mathcal{L}^2(D)$.  Firstly we use Theorem \ref{theo:bulk-relaxation}(ii) to find a sequence $(u_k, D_k) \subset C^\infty(\overline{\omega}; \mathbb{R}^3) \times \mathcal{C}(\omega) $ such that $ \mathcal{L}^2 (D_k) = \mathcal{L}^2(D) $ for each $ k \geq 1 $, $ u_k \to u $ in $ L^1(\omega; \mathbb{R}^3) $, $ \rchi_{D_k} \to \rchi_D $ in $ L^1(\omega) $ and
\begin{equation*}
	\limsup_{k \to \infty}\mathcal{E}_0(u_k, D_k) \leq \mathcal{E}^{\rm rel}_0(u, D).
\end{equation*}
Then for each $ k \geq 1 $ we use Step 1 to find a sequence $(u^{(k)}_h, D^{(k)}_h) \in \C^\infty(\overline{\Omega}; \mathbb{R}^3) \times \mathcal{C}(\Omega)$ such that $ \mathcal{L}^3 (D^{(k)}_h) = c_h $ for each $ h \geq 1 $, $ u^{(k)}_h \to u_k $ in $ L^1(\Omega; \mathbb{R}^3) $, $ \rchi_{D^{(k)}_h} \to \rchi_{D_k \times (0,1)} $ in $ L^1(\Omega) $ and
\begin{equation*}
	\limsup_{h \to \infty}\mathcal{E}_h\big(u^{(k)}_h, D^{(k)}_h \big) 
	\leq \mathcal{E}_0(u_k, D_k) + \frac{1}{k}.
\end{equation*}
At this point a diagonal sequence argument completes the proof. 
\end{proof}


\newcommand{\etalchar}[1]{$^{#1}$}

\end{document}